\documentclass[a4paper,10pt,twoside]{article}
\usepackage{amsmath,amsthm,amsfonts}
\usepackage[pdfborder={0 0 0},pagebackref=false]{hyperref}
\usepackage{geometry}
\geometry{a4paper,portrait,left=3.5cm,right=3.5cm,top=3.5cm,bottom=3.5cm}

\title{Extremes and gaps in sampling from a GEM random discrete distribution}
\author{Jim Pitman\footnote{Statistics Department, 367 Evans Hall \# 3860, University of California, Berkeley, CA 94720-3860, U.S.A.
    \texttt{pitman@berkeley.edu}}
  \and 
 Yuri Yakubovich\footnote{Saint Petersburg State University, St.\;Petersburg State University, 7/9 Universitetskaya nab., St.\;Petersburg, 199034 Russia. \texttt{y.yakubovich@spbu.ru}}}

\newcommand{\UU}[2]{U^{[#2]}_{#1}}
\newcommand{\XX}[2]{X^{[#2]}_{#1}}

\newcommand{\GEM}{\mathsf{GEM}}
\newcommand{\RAM}{\mathsf{RAM}}
\newcommand{\GRAM}{\mathsf{GRAM}}

\newcommand{\ISBP}{\mathsf{ISBP}}

\newcommand{\ptau}{p}

\newcommand{\Pdec}{P^{\downarrow}}
\newcommand{\BN}{\mathbb{N}}

\newcommand{\BP}{\mathbb{P}}
\newcommand{\YY}{F}
\renewcommand{\P}{\mathbb{P}}
\newcommand{\BE}{\mathbb{E}}

\newcommand{\Var}{\operatorname{Var}}
\newcommand{\re}{\mathrm{e}}
\newcommand{\ii}{\mathrm{i}}
\newcommand{\PD}{\mathrm{PD}}

\newcommand{\deq}{\overset{d}{{}={}}}
\newcommand{\eps}{\varepsilon}

\newcommand{\Pbul}{P_{\bullet}}

\newcommand{\ed}{\overset{d}{{}={}}}

\newcommand{\convd}{\overset{d}{{}\rightarrow{}}}

\newcommand{\ind}{\mathbf{1}}

\newcommand{\giv}{\,|\,}  

\newcommand{\Nright}{N^{ X \uparrow} }
\newcommand{\Ndec}{N^{\downarrow}}

\newcommand{\Nstar}{N^{*}}

\newcommand{\Nbig}{m}

\renewcommand{\Re}{\operatorname{Re}}

\newtheorem{theorem}{Theorem}
\newtheorem{proposition}[theorem]{Proposition}
\newtheorem{lemma}[theorem]{Lemma}
\newtheorem{corollary}[theorem]{Corollary}
\theoremstyle{definition}
\newtheorem*{remark*}{Remark}
\newtheorem{remark}{Remark}

\numberwithin{equation}{section}
\numberwithin{theorem}{section}
\numberwithin{remark}{section}

\begin{document}

\maketitle

\begin{abstract}
We show that in a sample  of size $n$ from a $\GEM(0,\theta)$ random discrete distribution, the gaps $G_{i:n}:= X_{n-i+1:n} - X_{n-i:n}$ between order statistics $X_{1:n} \le \cdots \le X_{n:n}$ of 
the sample, with the convention $G_{n:n} := X_{1:n} - 1$, are distributed like the first $n$ terms of an infinite sequence of independent geometric$(i/(i+\theta))$ variables $G_i$.
This extends a known result for the minimum $X_{1:n}$ to other gaps in the range of the sample, and implies that the maximum $X_{n:n}$ 
has the distribution of $1 + \sum_{i=1}^n G_i$, hence the known result that $X_{n:n}$ grows like $\theta\log(n)$ as $n\to\infty$, with an asymptotically normal distribution.
Other consequences include most known formulas for the exact distributions of $\GEM(0,\theta)$ sampling statistics, including the Ewens and  Donnelly--Tavar\'e sampling formulas.
For the two-parameter GEM$(\alpha,\theta)$ distribution we show that the maximal value grows like a random multiple of $n^{\alpha/(1-\alpha)}$ and find the limit distribution of the multiplier.
\end{abstract}

\tableofcontents

\newpage

\section{Introduction}

Consider a sequence of real random variables
$X_1,X_2,\dots$ with order statistics
\[
X_{1:n}:= \min_{1 \le i \le n} X_i \le X_{2:n} \le X_{3:n} \le  \cdots \le X_{n:n} := \max_{1 \le i \le n} X_i .
\]
For an independent and identically distributed (i.i.d.)\ sequence $(X_n)$ with a continuous distribution function $F(x):= \BP(X_n \le x )$,
the probabilistic structure of order statistics is well understood. Many exact distributional identities are obtained by reduction to the simplest i.i.d.\  cases:
\begin{itemize} 
\item $X_i = U_i$, signifying $F(x) = x$ for $0 \le x \le 1$, the {\em uniform distribution} on $(0,1)$; 
\item $X_i = \eps_i/\lambda $ for $\eps_i$ i.i.d.\  exponential$(1)$ and a constant $\lambda >0$, in which case $F(x) = 1 - e^{- \lambda x}$ for $x \ge 0$, the {\em exponential$(\lambda)$ distribution} on $(0,\infty)$. 
\end{itemize} 
This reduction involves the identity of $n$-dimensional joint distributions
\begin{equation}
\label{chvar}
(F( X_{i:n} ), 1 \le i \le n ) \ed ( U_{i:n}, 1 \le i \le n ) \ed (1 - \exp( - \eps_{i:n} ) , 1 \le i \le n)
\end{equation}
which holds with almost sure identities if the $U_i$ and $\eps_i$ are defined by $U_i:= F(X_i)$ and $\eps_i:= - \log(1 - U_i)$.
For discrete distributions the situation is complicated by possible ties but also well understood.
See \cite{MR1994955} 
\cite{MR1791071} 
for further background on order statistics. 

We are primarily interested here in the structure of the {\em gaps between sample values} which we list from the top of the sample down,
as
\[
G_{i:n} := X_{n+1-i:n} - X_{n-i:n}   \qquad (1 \le i \le n)
\]
for distributions of $X_i$ whose support has a {\em minimal value} $m_0 \ge 0$, with $X_{0:n}:= m_0$.
So we interpret $G_{n:n}:= X_{1:n} - m_0$ as the {\em gap below the minimum} of the sample, with $G_{n:n} = 0$ iff some sample value hits the minimum of the range.
The order statistics are then encoded in the gaps as $X_{k:n} = m_0 + \sum_{i = 0}^{k-1} G_{n-i:n}$.
In particular, the minimal and maximal values of the sample are 
\begin{equation}
\label{minmax}
X_{1:n}:= m_0 + G_{n:n} \qquad \mbox{ and } \qquad X_{n:n} = m_0 + \sum_{i = 1}^n G_{i:n}\,.
\end{equation}  
According to a well known result of Sukhatme--R{\'e}nyi 
\cite{MR0061792}, 
\cite[Repr.~3.4]{MR1791071},
for i.i.d.\  sampling the structure of the gaps is simplest for exponential variables:
\begin{equation}
\label{explgaps}
\mbox{ for } X_i = \frac{ \eps_i } {\lambda} \mbox{ the $G_{i:n}$ are independent with } (G_{i:n} , 1 \le i \le n ) \ed \left( \frac{X_i}{i}, 1 \le i \le n \right).
\end{equation}
Among absolutely continuous distributions, the family of shifted exponential distributions is characterized by quite weak forms of this assertion, 
for instance that $G_{1:2}$ is independent of $G_{2:2}$. 
See Ferguson \cite{MR0226804} 
 and earlier work cited there, and \cite{MR3474742} for more recent results in this vein. 
Formulas \eqref{minmax} and \eqref{explgaps} explain the well known identity in distribution
\begin{equation}
\label{expid}
M_n:= \max_{1 \le i \le n } \eps_i \ed T_n:= \sum_{i = 1}^n \frac{ \eps_i}{i}
\end{equation}
which implies the convergence in distribution, with centering but no normalization 
\begin{equation}
\label{explim}
M_n - \log n \,\, \ed \,\, T_n - \log n \,\, \convd  \,\, - \gamma + \sum_{i=1}^\infty \frac{ (\eps_i - 1 )}{i}
\end{equation}
where $\gamma:= \lim_{n \to \infty} ( - \log n  + \sum_{i=1}^n 1/i )$ is Euler's constant. The infinite sum converges both almost surely and in mean square, 
by Kolmogorov's theorem for sums of independent random variables with mean $0$, and the limit has the {\em Gumbel distribution function} $F(x) = \exp(-e^{-x})$.
More generally, the asymptotic behavior of the maximum $M_n$ of an i.i.d.\ sample from a continuous distribution is well understood.
If after proper rescaling, the distribution of $M_n$ has a non-degenerate weak limit, that limit must have the distribution function $F_\rho(x)=\exp(-(1+x\rho)^{-1/\rho})$, for 
some $\rho\in(-\infty,\infty)$
and $x$ such that $1+x\rho>0$ (and $F_\rho(x)$ equals 0 or 1 for other $x$), see, e.g.,~\cite{MR1791071}. Here $\rho$ (and the scaling) 
depends on the behavior of the distribution near the supremum of its support.  Limits can be also degenerate, and there exist distributions for which no non-degenerate limit is possible.

The situation is quite different for an infinite exchangeable sequence $X_1, X_2, \ldots$.
In this case any distribution can appear as a limiting distribution of the finite sample maximum $M_n:= X_{n:n}$, as shown by the following
example, which seems to be folklore. Let $Z_1,Z_2,\dots$ be a sequence of  i.i.d.\  random variables and let $M$ be a random variable,
independent of this sequence, with some given distribution. 
Take $X_n=Z_n+M$ to obtain an exchangeable sequence $X_1,X_2,\dots$.
If the support of the distribution of $Z_1$ is bounded above then $M_n$ converges a.s.\ to a shift of $M$,
 without any rescaling. 
So to obtain results of any interest about limit distributions of maxima from an exchangeable sequence, some further structure must be involved.

We are interested here in the distribution of sample gaps, sample extremes, and related statistics, 
for exchangeable samples from a random discrete distribution $\Pbul:= (P_1,P_2, \ldots)$ 
on the set $\BN: = \{1,2, \ldots\}$ of positive integers, subject to
\begin{equation}
\label{proper}
0 < P_j < 1 \mbox{ for every } j = 1,2, \ldots \mbox{ and } \sum_{j= 1}^\infty P_j = 1  \mbox{ almost surely. }
\end{equation}
We specify $\Pbul$ by the {\em residual allocation model} ($\RAM$) or {\em stick-breaking scheme} 
\cite{MR0010342}, 
\cite{sawyer1985sampling} 
\cite[\S 5]{MR1337249} 
\begin{equation}
\label{stickbreak}
P_j:= H_j \prod_{i=1}^{j-1} ( 1 - H_i),\quad  \mbox{ with}
\end{equation}
\begin{equation}
\label{properhazards}
 0 < H_i < 1 \qquad \mbox{and} \qquad \prod_{i=1}^\infty (1- H_i) = 0 \mbox{  almost surely}.
\end{equation}
The random variables $H_i$ may be called {\em residual fractions}, {\em random discrete hazards}, or {\em factors}.
We use the term $\RAM$ to indicate that the $H_i$ are independent, but not necessarily that they are identically distributed.
But we also consider these models in the broader context of $H_{\bullet}$ subject to \eqref{properhazards},
corresponding to $\Pbul$ subject to \eqref{proper}, without any further dependence assumptions, 
which we call a {\em generalized residual allocation model} ($\GRAM$) \cite[\S 5]{MR1337249}.

The case of i.i.d.\ factors $H_i$ has been extensively studied by Gnedin and coauthors 
\cite{MR2044594}, 
\cite{MR2735350}, 
who call this model the {\em Bernoulli sieve}.
Another $\RAM$ of particular interest, because of its {\em invariance under size-biased permutation} 
\cite{MR1387889} 
 is the $\GEM$ model with parameters $(\alpha,\theta)$. 
In this model, $H_i$ has the beta$(1-\alpha, \theta + i \alpha)$ density on $(0,1)$ 
\begin{equation}
\label{eq:Y}
\frac{ \BP[H_i\in dx]}{dx}=\frac{ x^{-\alpha}(1-x)^{\theta+i\alpha-1}}{B(1-\alpha,\theta+i\alpha)}\qquad \qquad(0 < x < 1, i\in\BN),
\end{equation} 
where $0 \le \alpha < 1$ and $\theta>-\alpha$ are real parameters, and $B(\cdot,\cdot)$ is Euler's beta function.
The $\GEM$ hazard variables are i.i.d.\  only in the important special case $\alpha = 0$ covered by the following theorem:

\begin{theorem}
\label{thm:Mn}
Let $X_1,X_2,\dots$ be an exchangeable sequence obtained by i.i.d.\  sampling from the $\GEM(0,\theta)$ distribution \eqref{stickbreak} for i.i.d.\ beta$(1,\theta)$ hazards $H_i$.
For each fixed $n \ge 1$, the gaps $G_{i:n}$ between the order statistics of the sample, read from right to left, with $G_{n:n}+1:= \min_{1 \le i \le n} X_i$, are 
independent geometric$(i/(i+\theta))$ variables, with means $\theta/i$ for $1 \le i \le n$. 
\end{theorem}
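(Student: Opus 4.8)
The plan is to reduce $\GEM(0,\theta)$ sampling to a homogeneous Poisson process together with an i.i.d.\ exponential sample, and then to invoke the Sukhatme--R\'enyi identity \eqref{explgaps} for exponential gaps. The first step is to re-express the stick-breaking \eqref{stickbreak} for beta$(1,\theta)$ hazards. Since the variables $V_i := -\log(1-H_i)$ are i.i.d.\ exponential$(\theta)$, the partial sums $S_j := V_1 + \cdots + V_j$ form a homogeneous Poisson process of rate $\theta$ on $(0,\infty)$, and the residual masses satisfy $\prod_{i=1}^j(1-H_i) = e^{-S_j}$. Sampling $X \sim \Pbul$ by inversion from a uniform $U$ then amounts to putting $\eps := -\log(1-U)$, which is exponential$(1)$ and independent of the Poisson process, and reading off $X = 1 + \Pi(\eps)$, where $\Pi(t)$ denotes the number of Poisson points in $(0,t)$; indeed $X=j$ precisely when $S_{j-1}<\eps<S_j$. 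Applying this to the whole sample, I represent $X_k = 1 + \Pi(\eps_k)$ for i.i.d.\ exponential$(1)$ depths $\eps_1,\dots,\eps_n$, independent of the rate-$\theta$ Poisson process.

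Next, because $t \mapsto 1 + \Pi(t)$ is non-decreasing, sorting the $X_k$ is the same as sorting the $\eps_k$, so $X_{k:n} = 1 + \Pi(\eps_{k:n})$ for every $k$ (ties among the $X$'s cause no difficulty, since the $\eps_k$ are a.s.\ distinct). Consequently each gap equals the number of Poisson points in the interval $(\eps_{n-i:n}, \eps_{n+1-i:n})$, with $\eps_{0:n}:=0$, that is $G_{i:n} = \Pi(\eps_{n+1-i:n}) - \Pi(\eps_{n-i:n})$, an interval whose length is the $i$-th exponential spacing $D_i := \eps_{n+1-i:n}-\eps_{n-i:n}$. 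This matches the convention $G_{n:n} = X_{1:n}-1 = \Pi(\eps_{1:n})$, consistent with \eqref{minmax} for $m_0 = 1$.

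I would then conclude using two independence facts. By \eqref{explgaps} the spacings $D_1,\dots,D_n$ are independent with $D_i$ exponential of rate $i$. Conditionally on $(D_1,\dots,D_n)$ the relevant intervals are disjoint and of prescribed lengths, so the counts $G_{i:n}$ are independent Poisson$(\theta D_i)$ variables depending on the $D_i$ only through the lengths. As the $D_i$ are themselves independent, the $G_{i:n}$ are therefore unconditionally independent, each being an exponential$(i)$-mixture of Poisson$(\theta D_i)$; the standard gamma--Poisson integral
\[
\P(G_{i:n}=k)=\int_0^\infty \frac{(\theta x)^k e^{-\theta x}}{k!}\, i\, e^{-i x}\,dx=\frac{i}{i+\theta}\Bigl(\frac{\theta}{i+\theta}\Bigr)^{k}
\]
identifies the marginal as geometric$(i/(i+\theta))$ with mean $\theta/i$, as claimed.

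The part to get right is the reduction step: verifying that the Poisson-process description of the boxes is exact (the exponential law of the $V_i$ and the identity $\prod_{i\le j}(1-H_i)=e^{-S_j}$), and that the monotone map $t\mapsto 1+\Pi(t)$ commutes with taking order statistics, so that the sample gaps become Poisson counts in the exponential spacings. Once this representation is in place the independence of the $G_{i:n}$ is essentially immediate, from the independence of Poisson counts in disjoint intervals combined with \eqref{explgaps}, and only the routine gamma--Poisson computation remains.
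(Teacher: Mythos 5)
Your proposal is correct and follows essentially the same route as the paper: map to the exponential scale via $\eps_i=-\log(1-U_i)$, recognize the bar process as a rate-$\theta$ homogeneous Poisson process independent of the exponential sample, invoke the Sukhatme--R\'enyi identity \eqref{explgaps} for the exponential spacings, and identify each gap as an exponential-mixed Poisson count, i.e.\ geometric. The only cosmetic difference is that you verify the Poisson-process representation directly from the beta$(1,\theta)$ law of the $H_i$, whereas the paper cites this as Ignatov's construction (Lemma~\ref{lem:poisson}).
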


As detailed in Section \ref{sec:apps}, this theorem contains most known results about $\GEM(0,\theta)$ samples,
including the well known sampling formulas for $\GEM(0,\theta)$, due to 
Ewens \cite{MR0325177}, 
Antoniak \cite{MR0365969}, 
and Donnelly and Tavar\'e \cite{MR827330}. 
It is also very close to recent studies  of random compositions derived from $\RAM$s with i.i.d.\ factors,
as we acknowledge further below.
Our simple description of $\GEM(0,\theta)$ gaps is hidden in these studies by different encodings of the 
values and their multiplicities in discrete random sampling, based on the {\em count sequence} $N^\circ_{\bullet:n}:= (N^\circ_{1:n}, N^\circ_{2:n}, \ldots)$  defined by
\begin{equation}
\label{boxcounts}
N^\circ_{b:n} := \sum_{i=1}^n \ind ( X_i = b) \qquad (b = 1,2, \ldots).
\end{equation}
Here $\ind(A)$  denotes the indicator of an event or set $A$.
The notion of a random sample from a random discrete distribution admits 
a variety of interpretations, some of which are recalled in Section \ref{sec:apps}.
But as our primary metaphor for sampling, we follow recent studies of the 
Bernoulli sieve \cite{MR2735350} 
in regarding the sample  $X_1, \ldots, X_n$ as an allocation of $n$ balls labeled by $i = 1,2, \ldots, n $ into an unlimited number of boxes 
labeled by $b \in \{1,2, \ldots\}$. 
So $X_i$ is the label of the box into which ball $i$ is thrown. Given $\Pbul$ the $X_i$ are independent allocations with 
$\BP(X_i = b \giv \Pbul) = P_b$.  The count $N^\circ_{b:n}$ is the number of balls thrown into box $b$, the sample maximum $X_{n:n} = \max\{b: N^\circ_{b:n} >0\}$ 
is the label of the rightmost occupied box, and so on.

The key to Theorem \ref{thm:Mn} is the close parallel between the structure of gaps in sampling from $\GEM(0,\theta)$, and from exponential$(\lambda)$, as in \eqref{explgaps}.
This parallel guided our choice to list the gaps from top down rather than bottom up, as well as the definition of the final gap $G_{n:n}:= X_{1:n} - 1$ in the discrete case.
We show in Section \ref{sec:pp} how Theorem \ref{thm:Mn} follows easily from its exponential$(\lambda)$ analog, using 
Ignatov's construction \cite{MR645134} 
of $\GEM(0,\theta)$ from 
a Poisson point process.  

This construction, and the change of variables \eqref{chvar}, which maps sampling by independent uniforms in $(0,1)$ to sampling by independent exponentials in $(0,\infty)$,
was developed and applied in a number of previous works 
\cite{MR645134}, \cite{MR2735350}, 
to deduce results for sampling from $\RAM$s and related regenerative composition structures from corresponding results in renewal theory. 
The method yields also the following corollary of Theorem \ref{thm:Mn}:

\begin{corollary} \label{char:indpt}
The $\GEM(0,\theta)$ models for\/ $0 < \theta < \infty$ are the only\/ $\RAM$s with i.i.d.\ factors 
such that for all sufficiently large $n$ the gaps between order statistics in a sample of size $n$ are independent. 
\end{corollary}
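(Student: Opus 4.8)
The plan is to derive the converse from Theorem~\ref{thm:Mn} by transporting the problem to the renewal picture of Section~\ref{sec:pp} and isolating the one distributional identity that independence of gaps truly imposes. Because the factors $H_i$ are i.i.d., Ignatov's construction realizes the sample gaps, read from the bottom up, as the successive increments $N(\eps_{(k)})-N(\eps_{(k-1)})$ of the counting function $N$ of a genuine renewal process with i.i.d.\ spacings $\zeta_i:=-\log(1-H_i)$, evaluated at the order statistics $0=\eps_{(0)}<\eps_{(1)}<\dots<\eps_{(n)}$ of an exponential$(1)$ sample; here the observation increments $A_k:=\eps_{(k)}-\eps_{(k-1)}$ are independent with $A_k$ exponential$(n+1-k)$, independent of $N$. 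Write $\psi(s):=\E[(1-H_1)^s]=\E[\re^{-s\zeta_1}]$ for the spacing transform and $V_k:=R_{N(\eps_{(k)})+1}-\eps_{(k)}$ for the overshoot at the $k$-th observation epoch, where $R_0<R_1<\dots$ are the renewal epochs.

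First I would record, from the regenerative (strong Markov) property of the renewal process at the first epoch beyond $\eps_{(k)}$ combined with the lack of memory of the exponential increments, that the gaps above level $k$ are a measurable function of $V_k$ together with fresh randomness ($A_{k+1},\dots$ and the post-$\eps_{(k)}$ spacings) independent of everything up to $\eps_{(k)}$. Two consequences follow. Since $A_1$ is exponential, $V_1$ is automatically independent of $N(\eps_{(1)})$, so the bottom gap $G_{n:n}$ is independent of all the others \emph{whatever} the law of $\zeta_1$; this is why no constraint can be read off the minimum and the characterization must be extracted one level up. Moreover, fixing any $n\ge 3$ for which the gaps are independent and testing the factorization against the event $\{G_{n-2:n}=0\}=\{A_3<V_2\}$, then integrating out the exponential$(n-2)$ variable $A_3$, recovers $\E[\re^{-(n-2)V_2}\mid G_{n-1:n}]$ and hence forces the increment $G_{n-1:n}=N(\eps_{(2)})-N(\eps_{(1)})$ to be independent of the overshoot $V_2$.

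The substantive computation is now at the second interval. After $\eps_{(1)}$ the process is a delayed renewal process with initial delay $V_1$ (the overshoot at the exponential$(n)$ epoch $\eps_{(1)}$), observed over $I_2$ of exponential$(n-1)$ length $A_2$; writing $\mu:=n-1$, a direct conditioning on the delayed epochs gives $\P(G_{n-1:n}=0,\,V_2>v)=h_1(v)$ and $\P(G_{n-1:n}=j,\,V_2>v)=\E[\re^{-\mu V_1}]\,\psi(\mu)^{\,j-1}h(v)$ for $j\ge1$, where $h(v):=\int_v^\infty(1-\re^{-\mu(y-v)})\,\P(\zeta_1\in dy)$ and $h_1$ is the identical functional of the law of $V_1$. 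Factorization of this joint law is exactly the requirement $h_1=C\,h$ with $C:=(1-\E[\re^{-\mu V_1}])/(1-\psi(\mu))$. The decisive simplification is that $h$ and $h_1$ each satisfy the linear ODE $g'(v)-\mu g(v)=-\mu\,\overline G(v)$, with $\overline G$ the pertinent survival function; hence $h_1=Ch$ forces $\P(V_1>v)=C\,\P(\zeta_1>v)$ for every $v$, and evaluation at $v=0$ gives $C=1$. Thus independence of the gaps compels the overshoot at the exponential$(n)$ time to have exactly the spacing law, $V_1\ed\zeta_1$.

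It remains to solve this memorylessness identity. The overshoot transform $\E[\re^{-sV_1}]=\frac{n}{\,n-s\,}\cdot\frac{\psi(s)-\psi(n)}{1-\psi(n)}$, set equal to $\psi(s)$ for all $s\ge0$, is a rational identity whose only solution is $\psi(s)=\theta/(\theta+s)$ with $\theta:=n\,\psi(n)/(1-\psi(n))\in(0,\infty)$; that is, $\zeta_1$ is exponential$(\theta)$, equivalently $\P(H_1\le x)=1-(1-x)^{\theta}$, the $\dbeta(1,\theta)$ law, which is precisely the $\GEM(0,\theta)$ model. Combined with Theorem~\ref{thm:Mn} this yields the stated characterization, and in fact shows that independence at a single sample size $n\ge3$ already forces $\GEM(0,\theta)$. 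I expect the main obstacle to lie in the reduction of the two middle paragraphs rather than in this closing algebra: one must recognize that the exponential observation times make the lower gaps independent for free, so the characterizing information cannot come from the extremes themselves but must be distilled---through the joint law of a gap and its overshoot and the ensuing ODE collapse---into the single identity $V_1\ed\zeta_1$ that rigidly pins down the exponential spacing.
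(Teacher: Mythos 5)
Your reduction strategy is attractive and parts of it are sound: the renewal-picture setup, the observation that the bottom gap is independent of the rest for any $\RAM$ with i.i.d.\ factors, the ODE inversion turning $h_1=Ch$ into $\P(V_1>v)=C\,\P(\zeta_1>v)$, and the closing algebra (the overshoot transform identity $\frac{n}{n-s}\cdot\frac{\psi(s)-\psi(n)}{1-\psi(n)}=\psi(s)$ does force $\psi(s)=\theta/(\theta+s)$). But there is a genuine gap at the pivotal middle step: you claim that independence of the gaps forces $G_{n-1:n}$ to be independent of the overshoot $V_2$, i.e.\ $h_1=Ch$ for \emph{all} $v$, by testing against $\{G_{n-2:n}=0\}=\{A_3<V_2\}$ and integrating out $A_3$. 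That test only yields equality of the two conditional Laplace transforms of $V_2$ (given $G_{n-1:n}=0$ versus given $G_{n-1:n}\ge 1$) at the single argument $n-2$. Using the whole upper gap vector does not help enough: conditionally on $V_2=v$, the law of $(G_{n-2:n},\dots,G_{1:n})$ depends on $v$ only through which observation interval contains the first post-$\eps_{(2)}$ renewal, so the conditional probability of every upper-gap pattern lies in the linear span of $\{1,e^{-v},e^{-2v},\dots,e^{-(n-2)v}\}$. Hence full independence of the gap vector delivers exactly the finitely many constraints $\E\bigl[e^{-rV_2}\bigm|G_{n-1:n}\bigr]$ constant, $r=1,\dots,n-2$, and finitely many Laplace values never determine a distribution on $[0,\infty)$. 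So $h_1=Ch$ does not follow, your claimed single-$n$ characterization is unproven (it is in fact stronger than the corollary, which the paper states and uses only under ``for all sufficiently large $n$''), and patching by varying $n$ is not routine either, since the two conditional laws $\nu_0^{(n)},\nu_+^{(n)}$ both depend on $n$ and both converge to the spacing law as $n\to\infty$, so the constraints degenerate rather than accumulate.

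For comparison, the paper's proof is genuinely asymptotic and uses the ``all large $n$'' hypothesis in an essential way: by the law of large numbers, for large $n$ the partial sums of gaps $\sum_{j=\delta n}^{n}G_{j:n}$ stabilize at the renewal count $N_S(0,-\log\delta]$; independence of the gaps plus the Poisson approximation for sums of independent Bernoulli variables with small parameters then shows $N_S$ has independent Poisson increments; and a point process that is simultaneously a renewal process and a Poisson process must be homogeneous Poisson, which by Ignatov's construction (Lemma~\ref{lem:poisson}) is exactly $\GEM(0,\theta)$. If you want to salvage your finite-$n$ route, the missing ingredient is a genuine argument upgrading the finitely many exponential-moment constraints to the distributional identity $V_1\ed\zeta_1$ --- or a proof that this upgrade is impossible, which would be consistent with the paper's remark that uniqueness under weaker hypotheses remains a conjecture.
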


We conjecture that the $\GEM(0,\theta)$ models are the only random discrete distributions of any 
kind subject to \eqref{proper} with this property of independence of sample gaps for all $n$, or for all large $n$.  
But resolving this question seems beyond the reach of our current methods.

We discovered these properties of gaps in $\GEM(0,\theta)$ samples
by seeking an adequate explanation of the identity in distribution for the maximum $M_n$ of a $\GEM(0,\theta)$ sample,
presented in the next corollary of Theorem \ref{thm:Mn}.
We first found this identity by a different method indicated in Section \ref{sec:max}, without consideration of gaps.
But the gaps explain it much better:

\begin{corollary}
\label{crl:Max}
For the maximum $M_n$ of a sample of size $n$ from $\GEM(0,\theta)$ 
\begin{equation}
\label{eq:Mnassum}
M_n:= \max_{1 \le i \le n} X_i = \max \{b: N^\circ_{b:n} > 0\} = 1 + \sum_{i=1}^n G_{i:n} \ed 1 + \sum_{i=1}^n G_i 
\end{equation}
for independent $G_i$ with the geometric$(i/(i+ \theta))$ distribution.
\end{corollary}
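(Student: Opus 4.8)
The plan is to obtain the corollary as a direct transcription of Theorem~\ref{thm:Mn}, the only real work being to record why the maximum is a telescoping sum of gaps. I would handle the chain of equalities in~\eqref{eq:Mnassum} one at a time. The first equality, $\max_{1 \le i \le n} X_i = \max\{b : N^\circ_{b:n} > 0\}$, is deterministic and follows straight from the definition~\eqref{boxcounts}: a label $b$ appears among $X_1,\dots,X_n$ precisely when $N^\circ_{b:n} > 0$, so the largest sample value is the label of the rightmost occupied box. This holds for every realization of the sample.

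For the second equality I would specialize the gap decomposition~\eqref{minmax} to the $\GEM$ setting. Here each $X_i$ takes values in $\BN$, so the minimal possible value is $m_0 = 1$; combined with the convention $G_{n:n} = X_{1:n} - 1$ and the telescoping $X_{k:n} - X_{k-1:n} = G_{n+1-k:n}$, this yields the almost sure identity $M_n = X_{n:n} = 1 + \sum_{i=1}^n G_{i:n}$. The final distributional identity then carries all the content, and it is supplied entirely by Theorem~\ref{thm:Mn}: that theorem asserts that $(G_{1:n},\dots,G_{n:n})$ has the joint law of independent geometric$(i/(i+\theta))$ variables. Since the sum of a random vector is a measurable function of it, $\sum_{i=1}^n G_{i:n} \ed \sum_{i=1}^n G_i$ for the independent $G_i$, and adding the constant $1$ completes the proof.

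I do not expect any substantial obstacle here: the corollary is immediate once one notices that the sample maximum is a telescoping sum of the gaps already analyzed in Theorem~\ref{thm:Mn}. The only points deserving care are bookkeeping ones---taking $m_0 = 1$ to match the index set $\BN$ of box labels, and respecting the convention that ties the final gap $G_{n:n}$ to the sample minimum---both of which are already fixed in the setup preceding~\eqref{minmax}.
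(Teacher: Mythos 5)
Your proof is correct, and it is exactly the derivation the paper intends when it states the result as a corollary of Theorem~\ref{thm:Mn} in the introduction: the deterministic identification of the maximum with the rightmost occupied box, the telescoping identity \eqref{minmax} with $m_0=1$, and then the joint distributional identity of the gap vector from Theorem~\ref{thm:Mn}. Note, however, that the only explicitly written proof of Corollary~\ref{crl:Max} in the paper (in Section~\ref{sec:max}, labeled a ``computational proof'') takes a genuinely different route that never touches the gaps: it applies Lemma~\ref{lem:Fel} and Lemma~\ref{lem:Mn} to express the probability generating function of $M_n-1$ through the tail moments $\BE R_k^j$ as in \eqref{eq:genfunc}, evaluates these for $\GEM(0,\theta)$ as $\BE_{0,\theta} R_k^j = \bigl(\theta/(\theta+j)\bigr)^k$ from \eqref{pdtails}, and then uses the partial fraction identity \eqref{eq:partfr} to collapse the resulting alternating sum into the product $\prod_{i=1}^n i/\bigl(i+\theta(1-z)\bigr)$ of geometric generating functions, as in \eqref{eq:genfM}. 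What the computational route buys is an independent verification that does not rely on Ignatov's Poisson representation, together with a general formula \eqref{eq:genfunc}--\eqref{pdtails} valid for all $\GEM(\alpha,\theta)$; what your (and the introduction's) route buys is strictly more than the distributional claim, namely the almost sure pathwise identity $M_n = 1+\sum_{i=1}^n G_{i:n}$ with the summands themselves independent geometrics, which is the conceptual explanation the paper says the gaps provide. Both arguments are sound; yours is the shorter and more structural one.
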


Since $G_i$ has mean $\BE G_i=\theta/i$ and variance $\Var G_i = \theta ( i + \theta)/i^2 \sim \theta/i$ as $i\to\infty$,
Lindeberg's central limit theorem implies the limit in distribution
\begin{equation}
\label{eq:Mnclt}
\frac{M_n-\theta\log n}{\sqrt{\theta\log n}}\overset{d}{\to} Z\qquad \mbox{ as } n\to\infty,
\end{equation}
where $Z$ has the standard normal distribution. This limit theorem for $M_n$ is known 
as an instance of
a more general normal limit theorem for $M_n$ in sampling from a large class of $\RAM$s with i.i.d.\  factors \cite[Theorem 2.1 (b)]{MR2538083}. 
Also known \cite[(5.22)]{MR2032426} 
and \cite[Theorem 2.3]{MR2538083} 
is the fact that \eqref{eq:Mnclt} holds also with $M_n$ replaced by the number $K_n$ of distinct values in the sample,
which may expressed in various ways parallel to the expressions for $M_n$ in \eqref{eq:Mnassum}:
\begin{equation}
\label{kndef}
K_n:= \sum_{j = 1}^n \ind \bigl( X_j \notin \{X_1, \ldots, X_{j-1}\} \bigr) = \sum_{b=1}^{\infty} \ind(N^\circ_{b:n} >0 ) 
= 1  + \sum_{i= 1}^{n-1}  \ind( G_{i:n} > 0 ).
\end{equation}
In Section \ref{sec:apps} we discuss further these different representations of $K_n$, their interpretations in various
applications, and related representations of the counts
\[
K_{j:n}:= \sum_{b=1}^{\infty} \ind(N^\circ_{b:n} = j )   
\]
which for $1 \le j \le n$ gives the numbers of clusters of size $j$ in the sample,  and for $j= 0$ gives 
\begin{equation}
\label{deltan}
K_{0:n} := M_n - K_n  = \sum_{j=1}^ {M_n} \prod_{i=1}^n \ind(   X_i \ne  j ) = G_{n:n} + \sum_{i=1}^{n-1} (G_{i:n} - 1)_+  
\end{equation}
which is the total count of all values between $1$ and $M_n$ that are missing in the sample of size $n$.
(Here and below $(x)_+=\tfrac12(x+|x|)$ is the positive part of $x$.)
There is by now a substantial theory of asymptotics for these and related statistics of samples from $\RAM$s with i.i.d.\  factors,
 developed by Gnedin and coauthors by various techniques.  
In particular, the work of \cite{MR2538083} 
shows that the central limit theorems \eqref{eq:Mnclt} for $M_n$ and the same result for $K_n$
hold jointly with the same limit variable  $Z$, for the simple
reason that for a large class of $\RAM$s with i.i.d.\  factors, including $\GEM(0,\theta)$, the difference $M_n-K_n$ in \eqref{deltan}
has a limit in distribution as $n \to \infty$, without any centering or scaling. 
See \cite{MR2538083} 
for further discussion, especially  \cite[Proposition~5.1]{MR2538083} 
for a pretty formula involving the gamma function for the probability generating function
of the large $n$ limit in distribution of $K_{0:n}$ for $\GEM(0,\theta)$,
and \cite{MR2508790} 
for generalizations to other $\RAM$s.
Other recent articles about refined limit theorems for various counting processes derived from $\RAM$s with i.i.d.\  factors are 
\cite{MR3176494} 
\cite{MR2926172} 
\cite{MR3416065} 
\cite{alsmeyer2016functional}.

We find the normal limit law for $M_n$ derived by sampling from a $\RAM$ with i.i.d.\  factors interesting, 
because normal limits cannot occur for the maximum of an i.i.d.\  sequence.
Compare with the quite different conclusion of \eqref{explim} for i.i.d.\  sampling from an exponential distribution,
where $M_n$ has a limit in law with just centering and no normalization. 
So, even though we regard Theorem \ref{thm:Mn} as a discrete analog of the more familiar description of gaps in i.i.d.\  sampling from an exponential distribution,
the limit theorems for $M_n$ implied by these results are quite different.
Naively, it might be expected that the discrete analog of the simple structure of gaps in an exponential$(\lambda)$ sample should
be the gaps in sampling from a geometric$(p)$ distribution, which is the $\RAM$ \eqref{stickbreak} with deterministic factors $H_i \equiv p$.
But apart from some simple results for a sample of size $n=2$, which are easily seen to hold for any $\RAM$ with $H_1$ independent of $H_2,H_3, \ldots$,
 the possibilities of various configurations of ties for $n \ge 3$ makes the description of gaps and related statistics for geometric$(p)$ sampling more complicated than might 
at first be expected. 
The distribution of the count of missing values $K_{0:n}$ in \eqref{deltan}, 
as well as the number $L_n$ of {\em ties with the maximal value} $M_n$
\begin{equation}
\label{nties}
L_n:= n - \max \{ j : X_{j:n} < X_{n:n} \}  = N^\circ_{{M_n}:n} =  \min \{ i : G_{i:n} > 0 \}
\end{equation}
have been the subject of many studies of i.i.d.\  sampling from fixed discrete distributions, 
especially from the geometric$(p)$ distribution.    
See for instance
\cite{MR2023876} 
\cite{MR2582705} 
and earlier literature cited there.
In sampling from the geometric$(p)$ distribution it is known the distributions of $K_{0:n}$ and $L_n$ remain tight as $n \to \infty$, but that they do not converge, due to a periodic phenomenon 
which has been extensively studied in this literature. 
As shown in \cite{MR2538083} however, 
for a large class of $\RAM$s including $\GEM(0,\theta)$, the periodic phenomena which arise from i.i.d.\   geometric$(p)$ sampling get smoothed out very nicely:
the count $L_n$ and related statistics such as the $K_{j:n}$ have limits in distribution without any centering or normalization as $n \to \infty$.
The idea that such results should be informed by analysis of gaps as well as counts leads to some developments of those limit theorems 
explored in a sequel \cite{ramgaps} of this article. 

For $\RAM$s with independent but non-identically distributed factors, our results are more limited.
For the two parameter $\GEM(\alpha,\theta)$ distribution the representation \eqref{eq:Mnassum} of $M_n$ as a sum of independent random variables is
no longer valid. Nevertheless we show in Section~\ref{sec:max2} that in this case the maximum of a size $n$ sample behaves as a random multiple of 
$n^{\alpha/(1-\alpha)}$ as $n\to\infty$, and in Section~\ref{sec:tiesatmax} we indicate some companion limit theorems for $L_n$ in this case.
See also a sequel to this article \cite{gibbs}, where for sampling from $\GEM(\alpha,\theta)$
we derive a result presented here without proof as Theorem \ref{thm:gibbs}, which gives the distribution of 
the value-ranked frequencies, meaning the sequence of non-zero components of $(N^\circ_{b:n} , b = 1, 2, \ldots )$.



\section{Point processes associated with random discrete distributions}
\label{sec:pp}

For a random discrete distribution $\Pbul:= (P_j)$ on the positive integers governed by the $\GRAM$ \eqref{stickbreak} let
\[
\YY_0:= 0 \mbox{ and } \YY_j:= \sum_{i=1}^j P_i = 1 - \prod_{i=1}^j (1 - H_i) \mbox{ for } k = 1,2, \ldots.
\]
Thinking of $\Pbul$ as a random discrete distribution on the real line, which happens to be concentrated on positive integers, $F_j = \Pbul(-\infty, j]$ is the random cumulative
distribution function evaluated at positive integers $j$.  In the stick-breaking interpretation, the $\YY_j \in [0,1]$ are the {\em break points}.  But we prefer the language of the
{\em stars and bars model}, discussed further in \cite{ramgaps}. 
Following the method developed by 
Ignatov \cite{MR645134} 
for $\GEM(0,\theta)$,  and further developed in 
\cite{MR2122798} 
\cite{MR2183216} 
\cite{MR2351686} 
for various other models of random discrete distributions, we treat the {\em bars} $\YY_j$ 
as the points of a simple point process  $N_F$
on $(0,1)$, which counts bars not including endpoints of $[0,1]$.  So
\[
N_F(a,b]:= \sum_{k=1}^\infty \ind( a < \YY_k  \le b )   \qquad ( 0 \le a < b < 1 )
\]
is the number of bars in $(a,b]$. The {\em stars} are the i.i.d.\ uniform on $[0,1]$ points 
$U_1,U_2,\ldots$ which fall between bars and define a 
random sample $X_1,X_2, \ldots$  from $\Pbul$ as $X_i=N_F(0,U_i]$. Then
\[
\BP( X_i \le j \giv \Pbul ) =  \YY_j \qquad (i = 1,2, \ldots,\, j = 1,2, \ldots).
\]
In terms of Gnedin's {\em balls in boxes model} of \cite{MR2044594}, 
the bars $F_j$ in $(0,1)$ are the dividers between boxes $[F_{j-1}, F_j)$ which collect the stars (balls) $U_i$ into clusters falling in the same box.
For the $\YY_j$, the prevailing assumption \eqref{proper} becomes 
\[
\label{properys}
0 < \YY_1 < \YY_2 < \cdots \uparrow 1  \qquad \mbox{ almost surely.}
\]

It is convenient to make the change of variable from $[0,1)$ to $[0,\infty)$ by the map $u\mapsto x = - \log(1-u)$. 
This is the inverse of the cumulative distribution function $x \mapsto 1 - e^{-x}$ of a standard exponential variable 
$\eps := - \log (1-U)$ for $U$ uniform on $(0,1)$.
Let  $S_0:= 0$ and
$$
S_j:= - \log ( 1 - \YY_j) = \sum_{i=1}^j - \log(1 - H_i) \qquad ( j = 1,2, \ldots).
$$
We regards these images $S_j$ of bars $\YY_j$ as the points of an associated point process $N_S$ on $(0,\infty)$:
\[
\label{NSdef}
N_S(s,t]:= \sum_{k=1}^\infty \ind( s < F_k  \le t )=N_F(1-\re^{-s},1-\re^{-t}]   \qquad ( 0 \le s < t < \infty ).
\]
Note that the assumption \eqref{proper} translates into $0 < S_1 < S_2 < \cdots \uparrow \infty$ a.s.
For convenience we recall the Ignatov's construction of $\GEM(0,\theta)$.

\begin{lemma}[\cite{MR645134}]\label{lem:poisson} With above notation, the following
conditions are equivalent:
\begin{itemize}
\item[\upshape(i)] $\Pbul$ has the $\GEM(0,\theta)$ distribution, meaning the $H_i$ are i.i.d.\ {\upshape beta}$(1,\theta)$.

\item[\upshape(ii)] $N_F$ is an inhomogeneous Poisson process on $(0, 1)$ with intensity $\theta\, du/(1 - u)$ at 
$u\in(0, 1)$.

\item[\upshape(iii)] $N_S$ is a homogeneous Poisson process on $(0,\infty)$ with intensity $\theta\,dt$ at $t > 0$.

\item[\upshape(iv)] The scaled spacings $(S_k - S_{k-1})/\theta$, $k = 1, 2,\dots$ are i.i.d.\ {\upshape exponential}$(1)$.
\end{itemize}
\end{lemma}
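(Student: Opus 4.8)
The plan is to establish the cycle $\mathrm{(i)} \Leftrightarrow \mathrm{(iv)} \Leftrightarrow \mathrm{(iii)} \Leftrightarrow \mathrm{(ii)}$. The guiding observation is that each of the four objects in play — the factor sequence $(H_i)$, the spacing sequence $(S_k - S_{k-1})_{k \ge 1}$, the point process $N_S$ on $(0,\infty)$, and the point process $N_F$ on $(0,1)$ — is a measurable bijective image of the adjacent one. Indeed, since $0 < F_1 < F_2 < \cdots \uparrow 1$ almost surely, the simple point process $N_F$ is equivalent to the increasing sequence $(F_j)$ of its atoms, which in turn determines and is determined by $(H_j)$ through $1 - F_j = \prod_{i \le j}(1-H_i)$; and $S_j = -\log(1-F_j)$ is a strictly increasing reparametrisation identifying $N_S$ with $N_F$ and the spacings $S_k - S_{k-1} = -\log(1-H_k)$ with the factors. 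Thus any distributional statement about one object transfers verbatim to the others, and it suffices to match up the four characterisations.

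For $\mathrm{(i)} \Leftrightarrow \mathrm{(iv)}$ I would compute the law of $Y := -\log(1-H)$ for a single factor $H$. If $H$ has the beta$(1,\theta)$ density $\theta(1-x)^{\theta-1}$ on $(0,1)$, then the substitution $x = 1 - \re^{-y}$ gives $Y$ the density $\theta \re^{-\theta y}$ on $(0,\infty)$, i.e.\ $Y$ is exponential$(\theta)$ and $Y/\theta$ is exponential$(1)$; the map $H \mapsto Y$ is a bijection, so the converse direction is the same computation read backwards. Applying this coordinatewise to the sequence $(H_i)$ shows that the $H_i$ are i.i.d.\ beta$(1,\theta)$ precisely when the scaled spacings $(S_k - S_{k-1})/\theta$ are i.i.d.\ exponential$(1)$, which is exactly $\mathrm{(iv)}$.

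The step $\mathrm{(iv)} \Leftrightarrow \mathrm{(iii)}$ is the textbook characterisation of the homogeneous Poisson process by its interarrival times: a simple point process with atoms $0 < S_1 < S_2 < \cdots \uparrow \infty$ is a homogeneous Poisson process of rate $\theta$ on $(0,\infty)$ if and only if the interarrival times $S_k - S_{k-1}$ (with $S_0 = 0$) are i.i.d.\ exponential$(\theta)$. The almost sure divergence $S_j \uparrow \infty$ guaranteed by \eqref{properhazards} ensures the atoms do not accumulate at a finite point, so the process is genuinely a point process on all of $(0,\infty)$ and the characterisation applies without caveat.

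Finally, $\mathrm{(ii)} \Leftrightarrow \mathrm{(iii)}$ follows from the mapping theorem for Poisson processes applied to the strictly increasing smooth bijection $\phi \colon u \mapsto -\log(1-u)$ carrying $(0,1)$ onto $(0,\infty)$, under which $\phi(F_j) = S_j$, so that $N_S$ is the pushforward $N_F \circ \phi^{-1}$. Since $d\phi(u) = du/(1-u)$, the intensity $\theta\,du/(1-u)$ on $(0,1)$ pushes forward to $\theta\,dt$ on $(0,\infty)$; hence $N_F$ is inhomogeneous Poisson with the stated intensity if and only if $N_S$ is homogeneous Poisson of rate $\theta$. This is the only step invoking more than a one-line computation, and so is the natural candidate for the \emph{main obstacle}, though in truth it is a routine application of the Poisson mapping theorem: the real content of the lemma is the bookkeeping that aligns the four equivalent encodings rather than any single hard estimate.
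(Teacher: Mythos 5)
Your three-step architecture --- (i)$\Leftrightarrow$(iv) by the change of variable $y=-\log(1-x)$, (iv)$\Leftrightarrow$(iii) by the interarrival-time characterisation of the homogeneous Poisson process, and (iii)$\Leftrightarrow$(ii) by the Poisson mapping theorem --- is the natural and correct route; the paper itself supplies no proof (the lemma is recalled from Ignatov's work), so there is nothing more specific to compare against. However, your write-up contains a genuine scaling error, and it sits at the only quantitatively substantive point of the lemma. You correctly compute that $Y=-\log(1-H)$ has density $\theta e^{-\theta y}$ on $(0,\infty)$ when $H$ is beta$(1,\theta)$, i.e.\ $Y$ has mean $1/\theta$. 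But the next assertion, ``$Y/\theta$ is exponential$(1)$,'' is false: $Y/\theta$ has mean $1/\theta^{2}$, and it is $\theta Y$ that is exponential$(1)$. Indeed your two steps contradict each other: in (iv)$\Leftrightarrow$(iii) you (correctly) require the interarrival times of a rate-$\theta$ Poisson process to have density $\theta e^{-\theta t}$, so that $\theta(S_k-S_{k-1})$ is exponential$(1)$, while in (i)$\Leftrightarrow$(iv) you conclude that $(S_k-S_{k-1})/\theta$ is exponential$(1)$; these normalisations differ by a factor $\theta^{2}$ and cannot both hold unless $\theta=1$.

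The root cause is that item (iv) as printed is itself off by this factor: read literally, it says the spacings have mean $\theta$, which would make $N_S$ a homogeneous Poisson process of rate $1/\theta$, contradicting (iii). The statement your computations actually establish --- and the only one consistent with (i)--(iii) --- is that the scaled spacings $\theta(S_k-S_{k-1})$, $k=1,2,\dots$ are i.i.d.\ exponential$(1)$, equivalently that $S_k-S_{k-1}$ are i.i.d.\ with density $\theta e^{-\theta t}$. A blind proof should have flagged this inconsistency rather than bridged it with a false identity. Once (iv) is amended, everything else you wrote is sound: the coordinatewise bijection between $(H_i)$, the spacings, $N_S$ and $N_F$; the interarrival characterisation, with non-accumulation of the $S_k$ guaranteed by \eqref{properhazards} (or automatically, by the strong law, in the direction starting from i.i.d.\ spacings); and the mapping-theorem computation that $\theta\,du/(1-u)$ pushes forward to $\theta\,dt$ under $u\mapsto-\log(1-u)$.
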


\noindent
\begin{proof}[Proof of Theorem \ref{thm:Mn}]
Whatever the random discrete distribution $\Pbul$ subject to \eqref{proper}, for the sample $(X_1, \ldots, X_n)$ constructed as above, the
order statistics of the discrete sample are obtained by counting bars to the left of the order statistics of the corresponding uniform
and exponential samples, according to  the formula
\begin{equation}
\label{xue}
X_{i:n} - 1 = N_F( 0, U_{i:n}]  =  N_S( 0, \eps_{i:n} ] ,
\end{equation}
while the gaps between order statistics of the discrete sample are obtained by counting bars between corresponding order statistics of the uniform
and exponential samples:
\begin{equation}
\label{gue}
G_{i:n} = N_F( U_{n-i:n}, U_{n-i +1:n} ] =  N_S( \eps_{n-i:n}, \eps_{n-i +1:n} ] .
\end{equation}
According to Lemma~\ref{lem:poisson} the point process $N_S$ is homogeneous Poisson with rate $\theta$. By construction it is independent of the gaps between exponential order statistics appearing in \eqref{gue}, which due to \eqref{explgaps} 
are independent and distributed like $\eps_i/i$ for $1 \le i \le n$.
It follows that the $G_{i:n}$ are independent, with $G_{i:n} \ed N_S(0,\eps/i]$ for $\eps$ standard exponential independent of $N_S$.
So the distribution of $G_{i:n}$ is the mixture of Poisson$(\theta t)$ distributions with $t$ assigned the exponential$(i)$ distribution of $\eps/i$.
It is well known that such a mixture is geometric$(p)$ for $p$ determined by equating means: $(1-p)/p = \theta/i$. This gives $p = i/(i+\theta)$, and the conclusion follows.
\end{proof}

\begin{proof}[Proof of Corollary \ref{char:indpt}]
For a RAM with i.i.d.\  factors the point process $N_S$ is a renewal process.  
For each fixed $\delta$ with $0 < \delta < 1$,  the number of points $\YY_j$ such that
$\YY_j \le 1-\delta$  is $N_S(0, - \log(\delta)]$, which is well known to have finite exponential moments.
On the other hand, by the law of large numbers for the sampling process, provided $- \log (\delta)$ is chosen to be  a continuity
point of the renewal measure, 
with probability one, for sufficiently large $n$, both the sum of gaps $\sum_{j= \delta n} ^n G_{j:n}$ and the
sum of indicators of non-zero gaps $\sum_{j= \delta n }^n \ind( G_{j:n} >0)$ will eventually equal the number of renewals $N_S(0, - \log(\delta)]$,
because every relevant box will be occupied. Assuming the gaps are independent, it then follows from the Poisson 
approximation to sums of independent Bernoulli$(p_i)$ random variables with small parameters $p_i$ that $N_S(0, - \log(\delta)]$ is Poisson distributed. A variation of this argument shows that
$N_S$ has independent, Poisson distributed increments. In other words, $N_S$ is a possibly inhomogeneous Poisson process. But the $\RAM $ assumption makes $N_S$ a renewal process.
By comparison of the Poisson and renewal decompositions of $N_S$ at its first point $S_1$, it is easily shown that the point processes that are both Poisson processes and renewal processes
are the Poisson processes of constant rate $\theta$ for some $\theta > 0$.
So the conclusion follows from 
Ignatov's construction of $\GEM(0,\theta)$ in Lemma~\ref{lem:poisson}.
\end{proof}

As shown in \cite{MR2122798} 
\cite{MR2508790} 
\cite{MR2538083} 
an asymptotic analysis of sampling statistics for a $\RAM$ with i.i.d.\  factors can be made by exploiting properties of the renewal counting process $N_S$ in this case.
But for $\GEM(\alpha,\theta)$ with $0 < \alpha < 1$, the spacings between the $S_j$ are independent but not identically distributed, 
with $S_j - S_{j-1}$ converging almost surely to $0$ as $j \to \infty$, by a simple Borel--Cantelli argument.
It follows that in sampling from $\GEM(\alpha,\theta)$ for $0 < \alpha < 1$ the behavior of the order statistics and gaps 
is very different from the case $\alpha = 0$, and not approachable by methods of renewal theory.

Both to illustrate applications of Theorem \ref{thm:Mn} in the case $\alpha = 0$, and to motivate study of the $\GEM$ order statistics 
for $0 < \alpha < 1$, before proceeding with that study we first show how Theorem~\ref{thm:Mn} leads to some known results
about the $\GEM$ model, and recall some of its diverse applications.

\section{Applications}
\label{sec:apps}
The $\GEM$ acronym was assigned by Warren Ewens \cite[p. 321]{MR2026891} 
to acknowledge the work of 
Griffiths \cite{griffiths80}, 
Engen \cite{MR0411097} 
\cite{MR515721}
and
McCloskey \cite{mccloskey1964model} 
in
developing the $\GEM$ model for random frequencies in genetics and ecology.
The $\GEM(0,\theta)$ with i.i.d.\  beta$(1,\theta)$ factors was studied first, following which the two parametric extension proposed by 
Engen \cite{MR515721} 
has also been 
extensively studied \cite{MR1156448,MR2245368,MR2663265}, 
motivated by its appearance in the structure of interval partitions generated by the zeros of various stochastic processes. 
New models of stationary reversible dynamics for population frequencies consistent 
with $\GEM(\alpha,\theta)$  for $0 < \alpha < 1$
have recently been developed and are of continuing interest 
\cite{MR2596654} 
\cite{costantini2016wright}. 
The $\GEM$ model has also been applied in Bayesian non-parametric statistics
as a building block for Bayesian analysis and machine learning algorithms for inferences 
about clustered data.
See the recent review by Crane \cite{MR3458585} \cite{MR3458591}. 

\subsection{Species sampling}
\label{sec:species}

The $\GEM$ distributions were first studied in the setting of ecology, where a random discrete distribution $\Pbul$ 
may be regarded as a {\em species abundance model},
meaning an idealized  listing of frequencies of an unlimited number of distinct  species  in a population of unlimited size.
To justify the use of infinite models in this setting,  some preliminary remarks may be in order.
All actual populations are finite, with only a finite number of species.  However ecologists discovered that while models with a large
finite number of species are quite intractable, remarkable simplifications occur in some particular infinite models.
In a sample of $n$ individuals from a large population of size say $\Nbig$, 
one can suppose that size $\Nbig$ population itself is a sample from some ideal infinite population. Finite samples from
this population may be assumed to exchangeable, and de Finetti's theorem leads to a representation of consistent models of species sampling from a large
population in terms of limiting random frequencies, as shown by Kingman.

In the context of species sampling, in a sample of size $n$ from some population, 
the data acquired is most naturally regarded as just the partition of $n$, that is a collection of positive integers that sum to $n$,
obtained by classifying individuals by species, and ignoring any species labels.
Such a partition of $n$ can be described in two different ways. The first is to list the number $N_{i:n}$ of individuals of type $i$, for
$1 \le i \le k$, where $k=K_n$ is the number of distinct species in the sample, and there is some convention for the ordering of types.
The second is to
provide for each $j\in[n]$ the count
\begin{equation}
\label{kjn}
K_{j:n}:= \sum_{i=1}^n \ind( N_{i:n} = j )  \qquad (j = 1, 2, \ldots )
\end{equation}
of species with $j$ representatives in the size $n$ sample.
The total number of distinct species is then
\[
K_n =  \sum_{j=1}^n K_{j:n}\, .
\]
The frequencies $P_i$ of species in the ideal infinite population then arise as almost sure limits of $N_{i:n}/n$ as $n\to\infty$,
for some consistently chosen ordering of $N_{\bullet:n}$.

It is an awkward aspect of random frequency models that most labeling of frequencies $P_i$ by integers or other countable sets are somewhat arbitrary.
There are several possible workarounds for this problem. The easiest way is to list the {\em ranked sample frequencies}, meaning the numbers of representatives of various
species in descending order as
\[
\Ndec_{\bullet:n}=(\Ndec_{1:n},\dots,\Ndec_{k:n}),\qquad \Ndec_{1:n}\ge\dots\ge\Ndec_{k:n}>0,
\]
which is the decreasing rearrangement of any other listing of the sample frequencies $(N_{1:n},\dots,N_{k:n})$. This is also a common way to list parts of partitions in combinatorics.
Another way to arrange species is to obtain a size $n$ sample by sampling the population one by one and listing the species in order
of their appearance in the sampling process. Given that any particular species in the whole population has $m$ representatives
in a sample of size $n$, the probability of that species being listed first in order of appearance is $m/n$, by the assumed exchangeability of the sampling process.
This leads to the general notion of a {\em size-biased random permutation} of a
finite or countably infinite index set $I$, or of a collection of components of some kind $C_i, i \in I$ that is indexed by $I$,
for some notion of sizes $V_i:= V(C_i)$ of the components being permuted
\cite{MR1104569} 
\cite{MR1387889}.  
Typically $V(C_i)$ is the number of elements for a finite set $C_i$, or 
some measure such as length for infinite sets $C_i$ like intervals.
The size function $V$ of components is subject to the requirement that $V_i >0$ and that $\Sigma:= \sum_{i} V_i  < \infty$, which needs to hold almost surely for a
collection of random components $(C_i, i \in I)$. Given such random components $(C_i, i \in I)$,
their {\em size-biased permutation}  is a random indexing of these components $(C_{\sigma(i)}, i = 1,2, \ldots)$, indexed  either by the set $[k]:= \{1,2, \ldots, k\}$ 
if there are a finite number $k$ of components,
or by $\BN$ if there are an infinite number of them. It is defined by a random bijection $\sigma$ from either $[k]$ or $\BN$ to $I$, such that
$\BP[\sigma(1)=j \,|\,C_i, i \in I ) =V_j/\Sigma$ for $j \in I$, when $C_{\sigma(1)}$ is called an {\em size-biased pick} from the components,
and for each $m \ge 1$ and $j_1,\dots,j_m$ with $\Sigma_m:= \Sigma -V_{j_1}-\dots- V_{j_m} >0$, the next component $C_{\sigma(m+1)}$ is an size-biased
pick from the remaining components indexed by $I \setminus\{j_1,\dots,j_m\}$:
$$\BP[\sigma(m+1)=j| C_i, i \in I; \sigma(1)=j_1,\dots,\sigma(m)=j_m]=V_j/\Sigma_m \mbox{  for all } j\in\ I \setminus\{j_1,\dots,j_m\}. $$

With component $C_i$ being a non-empty set of representatives of some species type $i$ in an exchangeable random sample of size $n$, 
for some arbitrary labeling of species by $i \in [k]$,
the sequence of sample frequencies {\em in order of appearance}
\[
\Nstar_{\bullet:n}=(\Nstar_{1:n},\dots,\Nstar_{k:n})
\]
is found to be the sequence of sizes $\Nstar_{i:n}=V(C_{\sigma(i)})$ of a size-biased random permutation of the clusters of different species found in the sample,
with size $V(C_i)=N_{i:n}$ being the number of each species present in the sample.
The notation $\Nstar_{\bullet:n}$ is a mnemonic for this size-biasing which is involved in any ordering of species by appearance in an exchangeable process of random sampling.  
This size-biased listing of sample cluster sizes in order of appearance turns out to be very convenient to work with,
especially when the frequencies $P_\bullet$ are in a size-biased random order themselves, an assumption which is quite natural in this context.  
This assumption holds for $\GEM(\alpha,\theta)$ model, as shown in  \cite{MR1156448}, 
which is one of the reasons for use and study of this model.

In the context of species sampling, the values $X_i$ in
a size $n$ sample from a random discrete distribution such as $\GEM(\alpha,\theta)$ 
seem to be of little interest besides the way that clusters of these values 
define a partition or a composition of $n$.  However a natural meaning for these sample values $X_i$ and their order statistics $X_{\bullet:n}$ 
can be provided using the fact that $\GEM(\alpha,\theta)$ 
frequencies $\Pbul$  are already in size-biased random order, hence invariant in distribution under size-biased permutation ($\ISBP$) 
\cite{MR1387889},  
meaning that
\[
 (P^*_1, P^*_2, \ldots) \ed  (P_1, P_2, \ldots )
\]
where $(P^*_1, P^*_2, \ldots)$ is a size-biased permutation of $(P_1, P_2, \ldots )$.

\begin{proposition}
\label{prop:species}
Consider an exchangeable process of species sampling $(Y_1, Y_2, \ldots)$ from random frequencies $P_\bullet$.
Split $(Y_1, Y_2, \ldots)$ into an {\em initial sample} $(Y_1, \ldots, Y_n)$ of size $n$, 
followed by a {\em secondary sample} $(Y_{n+1}, Y_{n+2}, \ldots)$ of unlimited size. 
For each $1 \le i \le n$ let $X_i$ be the number of species discovered by the secondary sample up to and including discovery
of $Y_i$ in the secondary sample.
Then $(X_1, \ldots, X_n)$ is a sample of size $n$ from $P_\bullet^*$, a size-biased random permutation of $P_\bullet$. 
In particular if $P_\bullet$ is $\ISBP$, as is the case for $\GEM(\alpha,\theta)$,
then $(X_1, \ldots, X_n)$ is distributed like a sample of size $n$ from $P_\bullet$.
Moreover, for a sample $(X_1, \ldots, X_n)$ from $P_\bullet^*$ so constructed,
as well as the usual interpretation of $K_{j:n}$ as numbers of species with $j$ representatives in the initial sample, and $K_n = \sum_{j=1}^n K_{j:n}$ the
total number of species found by the initial sample, various features of the order statistics in the sample can be interpreted as follows:
\begin{itemize}
\item the minimum sample value $X_{1:n}$ is the number of species encountered in the secondary sample up to and including when the first species in the primary sample is encountered.
\item the maximum sample value $M_n:= X_{n:n}$ is the number of species encountered in the secondary sample up to and including the first time $\tau_n$ that a member of each of the initial $K_n$ species has been encountered  in the secondary sample.
\item  the number $K_{0:n}:= M_n - K_n$ of missing values below the maximum of the sample,
is the  number of new species, not present in the initial sample,  which are encountered in the secondary sample before this stopping time $\tau_n$.
\end{itemize}
\end{proposition}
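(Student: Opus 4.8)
The plan is to reduce the whole proposition to one observation: the order in which distinct species make their first appearance in the secondary sample realizes a size-biased permutation of $P_\bullet$. First I would condition on the frequencies $P_\bullet$, under which the secondary sample $(Y_{n+1}, Y_{n+2}, \ldots)$ consists of i.i.d.\ picks with $\P(Y_{n+j} = b \giv P_\bullet) = P_b$. Scanning this sample and recording species in order of first appearance produces a random bijection $\sigma$ from the appearance ranks $\{1,2,\ldots\}$ onto the species labels, with $\sigma(k)$ the $k$-th distinct species seen; because $P_\bullet$ is proper by \eqref{proper} and the secondary sample is infinite, every species appears almost surely, so $\sigma$ is well defined. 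The standard computation identifies $P^*_k := P_{\sigma(k)}$ as a size-biased permutation in the sense of the definition recalled above: the first draw lands in species $b$ with probability $P_b$, and given that the species discovered so far form a set $A$, the next distinct species equals $b \notin A$ with probability $P_b/(1 - \sum_{j \in A} P_j)$.

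With this relabeling in hand, the key identity is $X_i = \sigma^{-1}(\text{species of } Y_i)$ for $1 \le i \le n$: the value $X_i$ is exactly the appearance rank assigned by the secondary sample to the species of the initial pick $Y_i$. Given $P_\bullet$, the initial picks $Y_1, \ldots, Y_n$ are i.i.d.\ from $P_\bullet$ and, depending only on $(Y_1, \ldots, Y_n)$, are independent of $\sigma$, which is built from the secondary sample alone. Hence, conditionally on $P_\bullet$ and $\sigma$, the variables $X_1, \ldots, X_n$ are i.i.d.\ with $\P(X_i = k \giv P_\bullet, \sigma) = P_{\sigma(k)} = P^*_k$. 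Since this conditional law depends on the conditioning only through $P^*_\bullet$, the tower property yields that $(X_1, \ldots, X_n)$ given $P^*_\bullet$ is i.i.d.\ from $P^*_\bullet$, i.e.\ a sample of size $n$ from the size-biased permutation $P^*_\bullet$. If moreover $P_\bullet$ is $\ISBP$, then $P^*_\bullet \ed P_\bullet$, and integrating the conditional law against this common distribution shows $(X_1, \ldots, X_n)$ is distributed as a size-$n$ sample from $P_\bullet$, which covers $\GEM(\alpha,\theta)$.

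The three interpretations then follow by unwinding the order statistics under the identification $X_i = \sigma^{-1}(\text{species of } Y_i)$. The distinct values among $X_1, \ldots, X_n$ are precisely the appearance ranks of the $K_n$ species present in the initial sample, so $X_{1:n} = \min_i X_i$ is the rank at which the first initial-sample species reappears in the secondary sample; $M_n = \max_i X_i$ is the rank at which the last of these $K_n$ species reappears, that is, the number of distinct species seen at the first time $\tau_n$ when every initial species has been re-encountered; and $M_n - K_n$ counts the appearance ranks in $\{1, \ldots, M_n\}$ not occupied by an initial-sample species, namely the species discovered afresh in the secondary sample before $\tau_n$.

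I expect the only delicate point to be the conditional-independence bookkeeping of the second paragraph: one must verify that $\sigma$ is a functional of the secondary sample only, hence independent of the initial picks given $P_\bullet$, and that passing from conditioning on the whole secondary sample to conditioning on $P^*_\bullet$ alone is legitimate via the tower property. The identification of the appearance order as a size-biased permutation is classical and can simply be cited, so it presents no real obstacle.
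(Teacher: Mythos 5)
Your proposal is correct and follows essentially the same route as the paper's proof: both relabel species by their order of first appearance in the secondary sample, identify that relabeling as a size-biased permutation of $P_\bullet$, and conclude that the initial sample is conditionally i.i.d.\ from the relabeled frequencies, with the three interpretations read off directly. The only difference is expository: where the paper compresses the key step into an appeal to exchangeability of the combined process, you spell out the conditional-independence and tower-property bookkeeping given $P_\bullet$ and $\sigma$, which is a faithful expansion of the same argument.
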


\begin{proof}
More formally, $X_i = x$ iff $i \in C(x)$, where
$C(1), C(2), \ldots$ is the list of clusters of individuals by species in the combined process, in their order of appearance in the secondary sample.
So for instance $C(1) := \{i \ge 1: Y_i = Y_{N(1)} \}$ with $N(1):= n+1$,  $C(2) := \{ i \ge 1: Y_i = Y_{N(2)} \}$, where $N(2) > N(1)$ is the index of the first individual of the second species to appear in the secondary sample, and so on.
The relabeling of species by their order of appearance in the secondary sample yields a size-biased permutation $P_\bullet^*$ of $\Pbul$
with $P_x^*$ the almost sure limiting relative frequency of $C(x)$ for each $x = 1,2, \ldots$.
It then follows by exchangeability that $X_1, \ldots, X_n$ is a sample of size $n$ from $P^*_\bullet$. 
The interpretations of the various statistics are then straightforward.
\end{proof}

One can also give similar interpretations of the gaps $G_{\bullet:n}$,  but this is a bit trickier. For a sample $X_1, \ldots, X_n$
let $\Nright_{\bullet:n}$ denote the list of sample frequencies in \textit{increasing order of $X$ values}, 
that is the subsequence of all strictly positive counts in the complete list of
counts $N^\circ_{\bullet:n}$ derived from the sample $X_1, \ldots, X_n$ as in \eqref{boxcounts}. 
So the count of values equal to $X_{1:n}$ comes first, and the count for $X_{n:n}$ last. 
In any random sample $X_1, \ldots, X_n$, it is clear from the definition that for any given composition $(n_1, \ldots, n_k)$ of $n$,
\[
\Nright_{\bullet:n}=(n_1,\dots,n_k)
\]
if and only if the sequence of gaps $G_{\bullet:n}$ between order statistics is such that
\begin{equation}
\label{nonzerogaps}
\{1\le i\le n-1:G_{i:n}>0\}\cup\{n\}=\{n_k,n_k+n_{k-1},n_k+n_{k-1}+n_{k-2},\dots,n\}.
\end{equation}
In the species sampling setting of Proposition \ref{prop:species}, 
$X_i$ is the number of species discovered by the secondary sample up to and including discovery
of the species of the $i$th initial individual, and $\Nright_{\bullet:n}$ is the
listing of cluster sizes in the primary sample in \textit{order of their discovery} by the secondary sample.
In this setting the gaps can be described as follows:
\begin{itemize}
\item For $\ell\in\{2,\dots,k\}$,  $G_{n_\ell+\dots+n_k:n}$ is the number of new species encountered in the secondary
sample after $\ell-1$ species of the primary sample are found up to and including the time when $\ell$-th species from the primary sample is found.
\item $G_{n:n}:=X_{1:n}-1$ is the number of species encountered in the secondary sample before some species present in the primary sample is found.
\item All other gaps are zero according to \eqref{nonzerogaps}.
\end{itemize}

\subsection{Some corollaries of  Theorem \ref{thm:Mn}}
\label{sec:corollaries}

We now explain how Theorem \ref{thm:Mn} implies a number of known results about $\GEM(0,\theta)$ samples.
Most of these were first discovered in the context of population genetics, where the age-ordering of alleles in a large population provides 
a natural indexing of allelic types, and the age-ordering of clusters of alleles found in a sample is of interest.  The first result is an easy corollary of Theorem \ref{thm:Mn}:

\begin{corollary}
\label{crlindics}
{\em (Gnedin-Pitman \cite[(3.1)]{MR2351686})}  
In sampling from $\GEM(0,\theta)$, the sequence of indicators  of strictly positive gaps $\ind(G_{i:n} > 0)$ for $1 \le i \le n$ has the same distribution as the initial segment of
$n$ trials in an unlimited sequence $(B_1, B_2, \ldots )$ of independent Bernoulli trials with  $\BP(B_i =1) = \theta/(i + \theta)$:
\begin{equation}
\label{indicsident}
(\ind(G_{i:n} > 0), 1 \le i \le n) \ed (B_1, B_2, \ldots, B_n) .
\end{equation}
\end{corollary}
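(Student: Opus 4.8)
The plan is to obtain this as an immediate corollary of Theorem~\ref{thm:Mn}, which already supplies the full joint law of the gaps $G_{1:n},\dots,G_{n:n}$. The only work is to read off the law of a single indicator $\ind(G_{i:n}>0)$ and then invoke independence. Concretely, I would first pin down the geometric convention used in Theorem~\ref{thm:Mn}: the variable $G_{i:n}$ is geometric$(p)$ with $p=i/(i+\theta)$, supported on $\{0,1,2,\dots\}$ with $\BP(G_{i:n}=k)=(1-p)^k p$, which is consistent with the stated mean $\theta/i=(1-p)/p$.

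For such a law the atom at zero carries mass $p$, so $\BP(G_{i:n}>0)=1-p=\theta/(i+\theta)$. Hence each $\ind(G_{i:n}>0)$ is a Bernoulli variable whose success probability $\theta/(i+\theta)$ matches the marginal of $B_i$. Next I would use the independence assertion of Theorem~\ref{thm:Mn}: the gaps $G_{1:n},\dots,G_{n:n}$ are mutually independent, and since $\ind(\,\cdot>0)$ is a fixed measurable function applied coordinatewise, the indicators $\ind(G_{i:n}>0)$ inherit that independence. Combining these two observations, $(\ind(G_{i:n}>0),1\le i\le n)$ is a vector of independent Bernoulli$(\theta/(i+\theta))$ variables, which is exactly the law of the initial segment $(B_1,\dots,B_n)$ of the asserted infinite Bernoulli sequence, establishing \eqref{indicsident}.

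I do not expect any genuine obstacle here: the substance is carried entirely by Theorem~\ref{thm:Mn}, and the remaining steps amount to a one-line marginal computation together with the stability of independence under coordinatewise maps. The single point worth flagging is that the success parameter $\theta/(i+\theta)$ does not depend on $n$, so the finite-$n$ identities are mutually consistent and embed coherently into one fixed infinite Bernoulli sequence $(B_i)$; this consistency is precisely what makes the age-ordering interpretation of these indicators meaningful.
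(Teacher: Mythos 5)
Your proposal is correct and follows exactly the route the paper intends: Corollary~\ref{crlindics} is presented there as an immediate consequence of Theorem~\ref{thm:Mn}, with the indicators inheriting independence from the gaps and each marginal computed as $\BP(G_{i:n}>0)=1-i/(i+\theta)=\theta/(i+\theta)$ under the convention that geometric$(p)$ is supported on $\{0,1,2,\dots\}$ with mass $p$ at zero. Your added remark that the parameters do not depend on $n$, so the finite-$n$ laws embed consistently into one infinite Bernoulli sequence, is a correct and worthwhile observation that the paper leaves implicit.
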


See also the work of Arratia, Barbour and Tavar\'e
\cite{MR1177897}  
\cite{MR2032426}  
\cite{MR2195574} 
\cite{MR3458588}. 
for further study of this process of independent Bernoulli trials $(B_1, B_2, \ldots )$ and its relation to the Ewens sampling formula \eqref{esf} below.

Consider now the three random compositions of $n$ defined above in terms
of a sample $X_1, \ldots, X_n$ from a  random discrete distribution $\Pbul$:
\begin{itemize}
\item the value-ordered sample frequencies $\Nright_{\bullet:n}$, 
\item the appearance-ordered sample frequencies $\Nstar_{\bullet:n}$, and 
\item the ranked sample frequencies$\Ndec_{\bullet:n}$, which are the decreasing rearrangement of either $\Nright_{\bullet:n}$ or of $\Nstar_{\bullet:n}$;
\end{itemize}
For $\Pbul$ with $\GEM(0,\theta)$ distribution,
from \eqref{nonzerogaps} and \eqref{indicsident} it is quite easy to deduce the 
{\em Donnelly--Tavar\'e sampling formula} \cite{MR827330} 
for the value-ordered frequencies:
\begin{equation}
\label{dtnright}
\BP_{0,\theta}(\Nright_{\bullet:n}=(n_1,\dots,n_k))=\frac{n!\,\theta^k}{(\theta)_n}\,\prod_{i=1}^{k}\frac{1}{n_i+\dots+n_k}
\end{equation}
for every composition $(n_1, \ldots , n_k)$ of $n$ into $k \le n$ parts, with $(x)_n:= \Gamma(x + n)/\Gamma(x)$ the Pochhammer symbol. 
The subscript notation $\BP_{\alpha,\theta}$ or $\BE_{\alpha,\theta}$ signals 
that probabilities or expectations are governed by the $\GEM(\alpha,\theta)$ model.
As indicated by Donnelly and Tavar\'e,
summing \eqref{dtnright}  over all compositions of $n$ with a prescribed weakly decreasing rearrangement 
yields  the celebrated {\em Ewens sampling formula}
\cite{MR0365969} 
\cite{MR0325177} 
for the distribution of the partition of $n$ generated
by sampling from $\GEM(0,\theta)$. 
That is, for  $K_{j:n}$ 
the count of clusters of size $j$  as in \eqref{kjn},
for each weak composition $(m_1, \ldots, m_n)$ of $k$, meaning $m_j \ge 0$ with $\sum_{j=1}^n  m_j = k$, with $\sum_{j=1}^n j m_j = n$
\begin{equation}
\label{esf}
\BP_{0,\theta}(K_{j:n} = m_j, 1 \le j \le n) =   \frac{n!\, \theta^k } {(\theta)_n } \prod_{j = 1}^ n \frac{1}{m_j!  j ^{m_j}  } \,.
\end{equation}
It is also easily seen from \eqref{dtnright} and \eqref{esf}
that the composition probability function displayed in \eqref{dtnright} is also the
composition probability function of the appearance-ordered frequencies $\Nstar_{\bullet:n}$. Recalling from earlier discussion, that in sampling from any $\Pbul$,
\[
\mbox{$\Nstar_{\bullet:n}$ is a size-biased permutation of $\Nright_{\bullet:n}$.} 
\]
the Donnelly--Tavar\'e formula implies 
the following very special property of $\GEM(0,\theta)$, in which the roles of $\Nstar_{\bullet:n}$ and $\Nright_{\bullet:n}$ can be reversed.

\begin{corollary}[{Donnelly and Tavar\'e \cite[(4.4)]{MR827330}}] 
\label{crldt}
In a sampling from  $\GEM(0,\theta)$, the sample frequencies in value-order $\Nright_{\bullet:n}$ and 
in appearance-order $\Nstar_{\bullet:n}$ are identically distributed. 
Their common distribution is described by formula \eqref{dtnright}.  Consequently, in sampling from $\GEM(0,\theta)$, 
\[
\mbox{ $\Nright_{\bullet:n}$ is a size-biased permutation of $\Nstar_{\bullet:n}$.} 
\]
\end{corollary}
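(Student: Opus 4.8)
The plan is to recognize the asserted reversibility as a detailed-balance identity for the size-biased permutation kernel with respect to the Donnelly--Tavar\'e law \eqref{dtnright}. Two ingredients are already in hand: both $\Nright_{\bullet:n}$ and $\Nstar_{\bullet:n}$ have the composition distribution, call it $\mu$, given by \eqref{dtnright}, and, for sampling from any $\Pbul$, conditionally on the value-ordered composition $\Nright_{\bullet:n}=\mathbf{c}$ the appearance order $\Nstar_{\bullet:n}$ is a size-biased permutation of $\mathbf{c}$. Treating the $k$ clusters as labeled, the size-biased permutation assigns the ordering $\mathbf{d}=(d_1,\dots,d_k)$ the weight
\[
K(\mathbf{c},\mathbf{d}) = \prod_{i=1}^{k} \frac{d_i}{d_i + \cdots + d_k},
\]
so that $\P(\Nstar_{\bullet:n}=\mathbf{d})=\mu(\mathbf{d})$ and $\P(\Nstar_{\bullet:n}=\mathbf{d}\giv \Nright_{\bullet:n}=\mathbf{c})=K(\mathbf{c},\mathbf{d})$. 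The goal is the symmetric statement $\P(\Nright_{\bullet:n}=\mathbf{c}\giv \Nstar_{\bullet:n}=\mathbf{d})=K(\mathbf{d},\mathbf{c})$, i.e.\ that $\Nright_{\bullet:n}$ is a size-biased permutation of $\Nstar_{\bullet:n}$.

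By Bayes' rule this conditional probability equals $\mu(\mathbf{c})K(\mathbf{c},\mathbf{d})/\mu(\mathbf{d})$, so everything reduces to the detailed-balance identity $\mu(\mathbf{c})K(\mathbf{c},\mathbf{d})=\mu(\mathbf{d})K(\mathbf{d},\mathbf{c})$ for all pairs $\mathbf{c},\mathbf{d}$ that rearrange one another. I would verify this by a short direct computation. Abbreviating the tail sums $C_i := c_i+\cdots+c_k$ and $D_i := d_i+\cdots+d_k$, the prefactor $n!\,\theta^k/(\theta)_n$ in \eqref{dtnright} is common to $\mu(\mathbf{c})$ and $\mu(\mathbf{d})$ and cancels, leaving
\[
\frac{\mu(\mathbf{c})\,K(\mathbf{c},\mathbf{d})}{\mu(\mathbf{d})} = \frac{\prod_{i=1}^k D_i}{\prod_{i=1}^k C_i}\cdot\frac{\prod_{i=1}^k d_i}{\prod_{i=1}^k D_i} = \frac{\prod_{i=1}^k d_i}{\prod_{i=1}^k C_i}.
\]
Since $\mathbf{c}$ and $\mathbf{d}$ list the same multiset of part sizes, $\prod_i d_i=\prod_i c_i$, so the right-hand side equals $\prod_i c_i/\prod_i C_i = K(\mathbf{d},\mathbf{c})$, which is exactly the required identity. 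The clean telescoping here is the whole point: the product of tail sums appearing in \eqref{dtnright} cancels the tail-sum denominators of the size-biased kernel, and the leftover numerators agree because they are products over a common multiset.

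The one point needing care, and the main (mild) obstacle, is the treatment of ties among part sizes. If one records only the size-sequence rather than labeled clusters, then $K(\mathbf{c},\mathbf{d})$ carries an extra combinatorial factor $\prod_v m_v!$ counting the cluster-orderings that produce the same composition $\mathbf{d}$, where $m_v$ is the multiplicity of the value $v$; but this factor depends only on the shared multiset and hence appears identically in $K(\mathbf{c},\mathbf{d})$ and $K(\mathbf{d},\mathbf{c})$, so it cancels in detailed balance and affects nothing. Alternatively one sidesteps the issue by running the argument on labeled blocks, i.e.\ ordered set partitions of $[n]$, and passing to compositions only at the end. I would also confirm at the outset that the general fact is used in its conditional form, which follows from exchangeability of the sample: given the occupied boxes and their contents, the order in which the boxes are first hit is a size-biased permutation of their counts.
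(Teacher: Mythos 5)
Your proof is correct and takes essentially the same route as the paper: the paper likewise assembles the Donnelly--Tavar\'e formula \eqref{dtnright} for both orderings together with the general fact that $\Nstar_{\bullet:n}$ is a size-biased permutation of $\Nright_{\bullet:n}$, and then asserts that these together force the reversal. Your Bayes-plus-detailed-balance computation, including the cancellation of the tie multiplicity factor $\prod_v m_v!$, is precisely the verification the paper leaves implicit.
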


The question of how to extend this result to $\GEM(\alpha,\theta)$  for $0 < \alpha < 1$ led us 
combine the representation of sampling from $\GEM(\alpha,\theta)$ provided by Proposition \ref{prop:species}
with the known description of the distribution of $\Nstar_{\bullet:n}$ for $\GEM(\alpha,\theta)$ 
\cite{MR1337249}  
\cite[\S 3.2]{MR2245368},  
to obtain the following theorem, whose proof will be detailed elsewhere \cite{gibbs}.

\begin{theorem}
\label{thm:gibbs}
In sampling from  $\GEM(\alpha,\theta)$, for all\/ $0 \le \alpha < 1$ 
\begin{equation}
\label{sbmagic:alpha}
\mbox{ $\Nright_{\bullet:n}$ is a (size$-\alpha)$-biased permutation of $\Nstar_{\bullet:n}$.}
\end{equation}
\end{theorem}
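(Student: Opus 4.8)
The plan is to realize both compositions on one probability space via the species‑sampling construction of Proposition~\ref{prop:species}, and then to identify the conditional law of $\Nright_{\bullet:n}$ given $\Nstar_{\bullet:n}$ as that of a size‑$\alpha$‑biased permutation. Recall that a \emph{size‑$\alpha$‑biased permutation} of a composition $(n_1,\dots,n_k)$ of $n$ is the random reordering whose first part equals $n_j$ with probability $(n_j-\alpha)/(n-k\alpha)$, after which one continues recursively with the same rule on the remaining $k-1$ parts (which sum to $n-n_j$ and carry total weight $n-n_j-(k-1)\alpha$). Since this operation depends only on the multiset of parts, and $\Nright_{\bullet:n}$ and $\Nstar_{\bullet:n}$ carry the same multiset of cluster sizes, it suffices to show that, conditionally on the primary sample $(Y_1,\dots,Y_n)$ producing clusters of sizes $(n_1,\dots,n_k)$ in appearance order (so that this sequence is $\Nstar_{\bullet:n}$), the order in which the secondary sample $(Y_{n+1},Y_{n+2},\dots)$ first discovers these $k$ species is a size‑$\alpha$‑biased permutation of $(n_1,\dots,n_k)$. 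Indeed, by Proposition~\ref{prop:species} the value order of the relabelled sample is exactly this discovery order, so $\Nright_{\bullet:n}$ is the list of primary cluster sizes read in order of discovery.

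The key step is the discovery‑order computation, which rests on the prediction rule of the $(\alpha,\theta)$ seating scheme governing $\GEM(\alpha,\theta)$ sampling. After the $n$ primary individuals are seated at $k$ tables of sizes $n_1,\dots,n_k$, each secondary individual joins an existing table $j$ with probability proportional to $n_j-\alpha$, and opens a new table with probability proportional to $\theta+(\text{current number of tables})\cdot\alpha$. Consider the race among the $k$ primary tables to be the first to receive a secondary customer. At every step before any primary table has been hit, the conditional probability of hitting table $j$ next is proportional to $n_j-\alpha$, uniformly in whatever new non‑primary tables have meanwhile been opened, because opening or augmenting a non‑primary table leaves each weight $n_j-\alpha$ unchanged and only rescales the common normalization. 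Hence the first primary species discovered is $j$ with probability $(n_j-\alpha)/\sum_i(n_i-\alpha)=(n_j-\alpha)/(n-k\alpha)$, exactly the first pick of a size‑$\alpha$‑biased permutation. Deleting the discovered table and repeating the argument for the remaining untouched primary tables gives, by induction on $k$, that the full discovery order is a size‑$\alpha$‑biased permutation of $(n_1,\dots,n_k)$, which is the assertion.

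The main obstacle is making this race argument fully rigorous: one must sum over the unbounded number of intervening steps in which the secondary sample creates or revisits non‑primary species, and confirm that this never disturbs the ratios $(n_j-\alpha):(n_{j'}-\alpha)$ among the not‑yet‑discovered primary tables. This is where the product form of the prediction rule is essential, and where one must also check that each primary table is eventually hit almost surely, which holds since $\sum_j(n_j-\alpha)>0$ and the process continues indefinitely. As a cross‑check, and as an alternative route realizing the hint of combining Proposition~\ref{prop:species} with the known law of $\Nstar_{\bullet:n}$ for $\GEM(\alpha,\theta)$, one can verify the identity by direct computation: apply a size‑$\alpha$‑biased permutation to the known appearance‑order composition probability and confirm agreement with the value‑order composition probability extracted from the discovery description, or, when $\alpha=0$, from \eqref{dtnright}. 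Specializing $\alpha=0$, where $n-k\alpha=n$ and the weights become the plain sizes $n_j$, recovers the ordinary size‑biased permutation and hence Corollary~\ref{crldt}.
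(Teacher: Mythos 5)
Your main argument is essentially correct, but be aware that this paper contains no proof of Theorem \ref{thm:gibbs} to compare against: the theorem is explicitly ``presented here without proof,'' with the proof deferred to the sequel \cite{gibbs}, and the paper only hints at the intended method, namely combining Proposition \ref{prop:species} with the known explicit distribution of $\Nstar_{\bullet:n}$ for $\GEM(\alpha,\theta)$. Relative to that hinted route, your argument is genuinely different: instead of matching the value-ordered composition probability against the (size$-\alpha$)-biased rearrangement of the known appearance-ordered formula (this computational route is what you relegate to a cross-check, and it is closest to what the paper intends), you run a sequential ``race'' using the $(\alpha,\theta)$ prediction rule, exploiting the fact that an untouched primary table retains weight $n_j-\alpha$ no matter how the secondary sample grows elsewhere, so that each successive discovery is a weighted pick with weights $n_j-\alpha$ among the surviving primary tables. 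This buys two things: it avoids any explicit product formulas, and --- because it uses only the existing-table weights $n_j-\alpha$ and never the new-table probability --- it extends verbatim to the Gibbs$(\alpha)$ models, exactly the generalization the paper remarks on after the theorem statement. The external input you need (that the partition generated by $\GEM(\alpha,\theta)$ sampling follows the $(\alpha,\theta)$ seating rule) is the same known fact, in sequential form, as the cited description of the law of $\Nstar_{\bullet:n}$, so both routes rest on equivalent inputs. Two points need tightening. First, your justification that every primary species is a.s.\ discovered (``the process continues indefinitely'') is not an argument; the clean fix is to condition on $\Pbul$ and note that each primary species has strictly positive frequency, so the i.i.d.\ secondary sample hits it a.s.\ (alternatively, conditional Borel--Cantelli: the hit probability at secondary step $t$ is $(n_j-\alpha)/(n+t-1+\theta)$, whose sum diverges). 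Second, the summation over the random discovery time should be displayed once to close the rigor gap you flag: with $T$ the first discovery time and $S$ the set of untouched primary tables,
\begin{equation*}
\BP(\mbox{first discovery is } j) \;=\; \sum_{t\ge 1} \BE\left[ \ind(T\ge t)\,\frac{n_j-\alpha}{n+t-1+\theta} \right]
\;=\; \frac{n_j-\alpha}{\sum_{i\in S}(n_i-\alpha)}\,\BP(T<\infty),
\end{equation*}
and iterating this together with the a.s.\ finiteness of $T$ makes the induction complete.
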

The meaning  of \eqref{sbmagic:alpha} is that given $\Nstar_{\bullet:n} = (n_1, \ldots, n_k)$, the frequency $\Nright_{1:n}$ of the minimal value is distributed like a random choice of
$(n_1, \ldots, n_k)$, with $n_i$ chosen with probability $(n_i - \alpha)/(n-k \alpha)$, and so on, as in the general definition of a size-biased permutation, just with the usual
size $n_i$ of a cluster replaced  by $n_i - \alpha$. 
In particular, for $0 < \alpha < 1$ and $n \ge 3$ the two random compositions $\Nright_{\bullet:n}$ and $\Nstar_{\bullet:n}$ are not identically distributed. 
Remarkably, the conclusion \eqref{sbmagic:alpha} holds not only for $\GEM(\alpha,\theta)$, but also for the sampling from $\Pbul$
the size-biased presentation of frequencies in any of the Gibbs$(\alpha)$ models introduced in 
\cite[Theorem 4.6]{MR2245368}  
and studied further in  \cite{MR2160320}. 

We note that Theorem \ref{thm:Mn} for $\GEM(0,\theta)$ yields also the following further corollary, which identifies the known distribution
of the minimal order statistic $X_{1:n}$. This can be 
read from a result for the infinitely many alleles model, due to 
Saunders, Tavar\'e, and Watterson \cite[Theorem 8]{MR753512} 
and the fact that this model generates age-ordered $\GEM(0,\theta)$ frequencies. See 
Donnelly and Tavar\'e \cite{MR827330} 
and Donnelly \cite[Proposition 3.5]{MR865115} 
for further discussion. 
The independence assertion in the corollary is easily shown to hold in sampling from any $\RAM$ with i.i.d. factors, due to the regenerative property of these models
discussed in \cite{MR2122798}. 

\begin{corollary}
\label{indpcemink}
In sampling from $\GEM(0,\theta)$, 
the minimum of the sample, $X_{1:n} = 1 + G_{n:n}$ has a shifted geometric$(n/(n+\theta))$ distribution, and
$X_{1:n}$ is independent of the pair of random compositions $\Nright_{\bullet:n}$ and $\Nstar_{\bullet:n}$, hence also independent of the Ewens$(\theta)$ distributed partition of $n$ generated by the sample.
\end{corollary}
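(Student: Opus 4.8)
The plan is to read everything off Theorem~\ref{thm:Mn} together with the encoding \eqref{nonzerogaps} of the value-ordered frequencies by the positions of the nonzero gaps. The distributional claim is immediate: by Theorem~\ref{thm:Mn} the bottom gap $G_{n:n}$ is geometric$(n/(n+\theta))$ on $\{0,1,2,\dots\}$, so $X_{1:n}=1+G_{n:n}$ is supported on $\{1,2,\dots\}$ with $\BP(X_{1:n}=m)=(\theta/(n+\theta))^{m-1}\,n/(n+\theta)$, the asserted shifted geometric$(n/(n+\theta))$ law. The real work is in the independence statement.

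For the independence, the key is to exhibit the pair $(\Nright_{\bullet:n},\Nstar_{\bullet:n})$ as a function of a collection of variables independent of $G_{n:n}$. I would work in the Poisson/exponential picture of Section~\ref{sec:pp}, writing $X_i=1+N_S(0,\eps_i]$ for i.i.d.\ exponential stars $\eps_1,\dots,\eps_n$ sampled against an independent rate-$\theta$ bar process $N_S$, and recalling the gaps $G_{i:n}=N_S(\eps_{n-i:n},\eps_{n-i+1:n}]$ for $1\le i\le n-1$ alongside $G_{n:n}=N_S(0,\eps_{1:n}]$. Let $\mathcal R=(R_1,\dots,R_n)$ be the ranks, $R_i$ being the position of $\eps_i$ among the order statistics. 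Two measurability observations drive the argument. First, by \eqref{nonzerogaps} the composition $\Nright_{\bullet:n}$ is a deterministic function of the indicators $\ind(G_{i:n}>0)$, $1\le i\le n-1$, and in particular does not involve $G_{n:n}$. Second, $\Nstar_{\bullet:n}$ is a function of $(\mathcal R, G_{1:n},\dots,G_{n-1:n})$ alone: the gaps $G_{1:n},\dots,G_{n-1:n}$ determine which order-statistic positions share a box (hence the value-ordered block structure), the ranks $\mathcal R$ assign each original index to its block, and the appearance-order frequencies are then obtained by listing block sizes in order of least original index. Crucially, neither composition depends on $G_{n:n}$, which merely counts the empty boxes to the left of the leftmost star.

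It then remains to check that $G_{n:n}$ is independent of $(\mathcal R, G_{1:n},\dots,G_{n-1:n})$, and this factors into two independences already available. By Theorem~\ref{thm:Mn} all gaps are independent, so $G_{n:n}$ is independent of $(G_{1:n},\dots,G_{n-1:n})$; and since the ranks of i.i.d.\ continuous variables are uniform and independent of their order statistics, while $N_S$ is independent of the stars, $\mathcal R$ is independent of $(N_S,\eps_{\bullet:n})$ and hence of the entire gap vector $(G_{1:n},\dots,G_{n:n})$. Combining, the three blocks $G_{n:n}$, $(G_{1:n},\dots,G_{n-1:n})$ and $\mathcal R$ are mutually independent, so $G_{n:n}$ is independent of $(\mathcal R, G_{1:n},\dots,G_{n-1:n})$ and therefore of $(\Nright_{\bullet:n},\Nstar_{\bullet:n})$. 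Since $X_{1:n}=1+G_{n:n}$ and the Ewens$(\theta)$ partition of $n$ is the multiset of block sizes, a function of $\Nright_{\bullet:n}$, the remaining assertions follow at once.

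The main obstacle I anticipate is not the independence of $G_{n:n}$ from the other gaps (that is Theorem~\ref{thm:Mn}) but the bookkeeping showing $\Nstar_{\bullet:n}$ is measurable with respect to $(\mathcal R, G_{1:n},\dots,G_{n-1:n})$: one must verify that passing to the appearance ordering introduces no randomness beyond the ranks and the value-order block structure, and in particular never reaches back to $G_{n:n}$. Once this is pinned down the rest is a routine factorization of laws. An alternative, which the authors note works for any $\RAM$ with i.i.d.\ factors, would derive the independence from the regenerative property of the induced composition structure, but the direct gap-and-rank argument above has the advantage of being self-contained within the present framework and of simultaneously delivering the exact geometric law of $X_{1:n}$.
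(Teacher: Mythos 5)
Your proof is correct, and it differs from the paper's in how it handles $\Nstar_{\bullet:n}$. The paper's argument is shorter and more abstract: after noting, as you do, that $\Nright_{\bullet:n}$ is a function of $G_{1:n},\dots,G_{n-1:n}$ and hence independent of $G_{n:n}$ by Theorem~\ref{thm:Mn}, it invokes exchangeability to say that \emph{conditionally given the order statistics}, $\Nstar_{\bullet:n}$ is just a size-biased permutation of $\Nright_{\bullet:n}$; since this conditional law depends only on $\Nright_{\bullet:n}$, the order statistics and $\Nstar_{\bullet:n}$ are conditionally independent given $\Nright_{\bullet:n}$, and the unconditional independence of $X_{1:n}$ from the pair follows by a routine computation. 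You instead make the exchangeability concrete inside the Poisson/exponential picture: you introduce the rank vector $\mathcal R$ of the exponential stars, verify that $\Nstar_{\bullet:n}$ is measurable with respect to $(\mathcal R, G_{1:n},\dots,G_{n-1:n})$ (blocks from the positive-gap indicators via \eqref{nonzerogaps}, assignment of indices to blocks from the ranks, appearance order from least original indices), and then use the standard facts that ranks of i.i.d.\ continuous variables are independent of their order statistics and that $N_S$ is independent of the stars to get full mutual independence of $G_{n:n}$, $(G_{1:n},\dots,G_{n-1:n})$, and $\mathcal R$. All of these steps check out, including the subtle point that $G_{n:n}$ never enters the block structure or the appearance ordering. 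What the two approaches buy: the paper's conditional-independence argument is three lines and needs no auxiliary construction beyond exchangeability (indeed it would apply verbatim whenever the gaps are independent); your rank-based factorization is longer but entirely explicit and self-contained, effectively supplying the proof of the exchangeability step the paper leaves implicit, and it delivers the unconditional product structure of $(\mathcal R, G_{1:n},\dots,G_{n:n})$ directly rather than passing through conditional independence.
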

\begin{proof}
The independence of $G_{n:n}$ and $\Nright_{\bullet:n}$ is clear, because $\Nright_{\bullet:n}$ is a function of the $G_{i:n}$ $1 \le i \le n-1$.
But by exchangeability, conditionally given the entire collection of order statistics,  $\Nstar_{\bullet:n}$ is just a size-biased permutation of $\Nright_{\bullet:n}$.
So the order statistics and $\Nstar_{\bullet:n}$ are conditionally independent given $\Nright_{\bullet:n}$, from which the conclusion follows easily.
\end{proof}

\subsection{Combinatorial limit theorems}
\label{sec:combi}

Combinatorial models often involve exchangeable random partitions of $[n]$ into a collection of subsets of various sizes, 
typically connected components of a graph associated with the model, such as the cycles of a permutation, trees in a forest, or connected components
of a mapping digraph.
It is known 
\cite{MR2032426} 
\cite{MR2245368}  
that in many models for such
a combinatorial structure picked uniformly at random,  the sequence $\Ndec_{\bullet:n}$  of ranked component sizes converges in law after scaling by $n$: 
\begin{equation}
\label{asdeconv}
n^{-1} (\Ndec_{1:n}, \Ndec_{2:n} , \ldots )  \convd   (\Pdec_1, \Pdec_2 , \ldots ) \sim \PD(\alpha, \theta) ,\qquad n\to\infty,
\end{equation}
for some $(\alpha,\theta)$, where $\PD(\alpha,\theta)$, the {\em Poisson--Dirichlet distribution with parameters $(\alpha,\theta)$} 
is the distribution of the decreasing rearrangement of $\GEM(\alpha,\theta)$ 
\cite{MR1434129}. 
According to the general theory of such limit distributions \cite{MR996613}  
\cite{MR1659532}, 
this is equivalent to the corresponding convergence 
\begin{equation}
\label{asconv}
n^{-1} (\Nstar_{1:n}, \Nstar_{2:n} , \ldots )  \convd  (P^*_1, P^*_2 , \ldots )  \sim  \GEM(\alpha, \theta),\qquad n\to\infty,
\end{equation}
for the size-biased reordering $\Nstar_{\bullet:n}$ of the component sizes,
where the limit has the $\ISBP$ $\GEM(\alpha,\theta)$ distribution.
The treatment of 
\cite{MR2032426} 
presents a large number of such examples with $\alpha = 0$. 
An example with $\alpha = \theta = 1/2$ is provided by the tree components of a uniform random mapping digraph 
\cite[(9.7)]{MR2245368}.  
There are many similar examples, with ranked and size-biased compositions of $n$ derived from other constructions. For instance, if the partition of $n$
is the decreasing arrangement of lengths of excursions of an aperiodic Markov chain away from some recurrent state $0$ run for $n$ steps, and the return time of the state is in the
domain of attraction of the stable law of index $\alpha \in (0,1)$, then it is known 
\cite{MR1434129} 
that \eqref{asdeconv} holds for this 
$\alpha$ with $\theta = 0$, for the Markov chain started in state $0$,  and with the same $\alpha$ with $\theta = \alpha$ for the Markovian bridge obtained by further conditioning to return to state $0$ at a late time $n$. 
In this setting,  the lengths of excursions are most naturally listed in their order of creation by the Markov chain,  
which is neither ranked nor size-biased. Still, the analysis of such limit laws for excursions is assisted by the device of deliberately size-biasing the order of excursions,
to create a $\GEM(\alpha,\theta)$ limit as in \eqref{asconv} which is much easier to deal with than the $\PD(\alpha,\theta)$ limit of ranked lengths.

In any of these settings where $\PD(\alpha,\theta)$ and $\GEM(\alpha,\theta)$ arise hand-in-hand as limit laws for ranked and size-biased counts of some kind,
it was shown by Kingman 
\cite{MR509954} 
that the structure of the limit theorems extends to corresponding limit theorems for sampling components of the structure of size $n$.
To set this up, consider a limited sample of $n$ elements from a much larger set of size $\Nbig$ supporting a combinatorial structure whose ranked relative component sizes 
$\Nbig^{-1}\Ndec_{\bullet:\Nbig}$ are well approximated in distribution by $\PD(\alpha,\theta)$ for some $\alpha,\theta$.
Let each component in the structure of size $\Nbig$ be labeled by its index of appearance in a size-biased listing of components. 
If the components of the combinatorial structure generate an exchangeable partition of $[\Nbig]$, 
these labels can be assigned to components in order of their least elements.
If the components of the combinatorial structure do not generate an exchangeable partition of $[\Nbig]$, as in the case of excursion lengths of a Markov chain run for $\Nbig$ steps,
let the component labels be assigned by a size-biased random permutation of component sizes, as in Section~\ref{sec:species}.
The following proposition is an easy consequence of Kingman's theory of  partition structures 
\cite{MR509954} 
\cite{MR996613} 
\cite{MR1659532}: 

\begin{proposition}
Let $\UU{i}{\Nbig}$ for $1 \le i \le n < \Nbig$ be a  simple random sample of size $n$ from $[\Nbig]$, either with or without replacement, and let $\XX{i}{\Nbig}$ be the label of the component of the combinatorial structure that 
contains $\UU{i}{\Nbig}$, for a size-biased labeling of components, that is independent of the random sample $\UU{i}{\Nbig}, 1 \le i \le n$.
Suppose there is the convergence in distribution \eqref{asconv} of size-biased relative component sizes to $\GEM(\alpha,\theta)$ with $n$ replaced by $\Nbig \to \infty$. 
Then for each fixed $n$ there is convergence of joint distributions
\[
( \XX{i}{\Nbig}, 1 \le i \le n ) \convd (X_i, 1 \le i \le n) \mbox{ a sample from } \GEM(\alpha, \theta) \mbox{ as } \Nbig \to \infty,
\]
which implies also convergence in distribution of corresponding  order statistics, counts and gaps to those derived from the $\GEM$ sample.
\end{proposition}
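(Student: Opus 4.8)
The plan is to establish the convergence of the full sample vectors $(\XX{i}{\Nbig}, 1 \le i \le n)$ to $(X_i, 1 \le i \le n)$ via a finite-dimensional argument, then invoke the continuous mapping theorem to pass the convergence down to order statistics, counts, and gaps. The crucial structural observation is that, conditionally on the size-biased relative frequencies, each sampled label $\XX{i}{\Nbig}$ is an independent draw from those frequencies, so the law of the whole sample is a mixture over the random frequency vector. The hypothesis \eqref{asconv} gives weak convergence of that mixing distribution, and I would lift this to the sample law by a standard ``de Finetti-style'' continuity argument.

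First I would fix $n$ and compute, for any specific outcome $(x_1, \ldots, x_n)$ with values in $\{1, 2, \ldots\}$, the conditional probability
\begin{equation}
\label{condprob}
\BP\bigl( \XX{i}{\Nbig} = x_i, 1 \le i \le n \mid (C_j) \bigr) = \prod_{i=1}^n \frac{\Nstar_{x_i:\Nbig}}{\Nbig} + o(1),
\end{equation}
where the product of the size-biased relative frequencies $\Nstar_{x_i:\Nbig}/\Nbig$ is the exact answer for sampling with replacement, and the $o(1)$ correction accounts for sampling without replacement from a population of size $\Nbig$; since $n$ is fixed and $\Nbig \to \infty$, this correction is uniformly negligible. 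Taking expectations in \eqref{condprob} writes the unconditional sampling probability as $\BE \prod_{i=1}^n (\Nstar_{x_i:\Nbig}/\Nbig)$. Because only finitely many coordinates are involved and each relative frequency lies in $[0,1]$, I would argue that the finitely many joint coordinates of $\Nbig^{-1}\Nstar_{\bullet:\Nbig}$ appearing here converge jointly in distribution to the corresponding coordinates of $\GEM(\alpha,\theta)$ by \eqref{asconv}, and that the bounded continuous functional $(p_1, p_2, \ldots) \mapsto \prod_{i=1}^n p_{x_i}$ therefore passes to the limit, yielding $\BE \prod_{i=1}^n P^*_{x_i}$, which is exactly the probability that a $\GEM(\alpha,\theta)$ sample $(X_i)$ equals $(x_i)$.

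This gives convergence of the sample law on each fixed finite configuration, which for $\{1,2,\ldots\}$-valued vectors is equivalent to the claimed weak convergence $(\XX{i}{\Nbig}) \convd (X_i)$. The final sentence then follows because order statistics $X_{i:n}$, the counts $N^\circ_{b:n}$ and $K_{j:n}$, and the gaps $G_{i:n}$ are all fixed measurable functions of the sample vector $(X_1, \ldots, X_n)$; weak convergence of the vectors transfers to weak convergence of any such functional by the continuous mapping theorem, noting that on the discrete state space these functionals are automatically continuous.

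\emph{The main obstacle} I anticipate is controlling the without-replacement correction in \eqref{condprob} uniformly and making precise the claim that the size-biased labeling can be chosen independently of the sampling indices $\UU{i}{\Nbig}$ in a way consistent with the size-biased convergence hypothesis. For combinatorial structures generating an exchangeable partition of $[\Nbig]$, labeling components by least element automatically produces the size-biased order and couples cleanly with uniform sampling; in the non-exchangeable case (e.g.\ Markov-chain excursions) one must verify that an externally imposed size-biased permutation of component sizes is genuinely independent of the sample and still delivers \eqref{asconv}. Once that independence is in place, the mixture representation and the bounded-functional limit are routine, so the substance of the proof is really the setup of the sampling mechanism rather than any delicate estimate.
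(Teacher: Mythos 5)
Your argument is correct, but there is nothing in the paper to compare it against line by line: the paper offers no proof of this proposition at all, stating only that it is ``an easy consequence of Kingman's theory of partition structures'' with citations to Kingman and to the surveys of partition/composition structures, and then moving on to interpretive discussion of the double sampling scheme. Your proposal is a self-contained substitute for that citation, and it is essentially the argument the citation gestures at: conditionally on the labeled structure, the sample $(\XX{i}{\Nbig}, 1 \le i \le n)$ is i.i.d.\ from the size-biased relative frequencies, so each point probability is $\BE \prod_{i=1}^n (\Nstar_{x_i:\Nbig}/\Nbig)$ up to the replacement correction; this is a bounded continuous functional of finitely many coordinates of the frequency sequence, so hypothesis \eqref{asconv} (finite-dimensional convergence in the product topology) passes it to the limit $\BE \prod_{i=1}^n P^*_{x_i}$, which is the $\GEM(\alpha,\theta)$ sampling probability; pointwise convergence of probability mass functions on the countable space $\BN^n$ gives weak convergence (Scheff\'e), and every functional of a discrete vector is continuous, so order statistics, counts and gaps follow by continuous mapping. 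Two small tightenings: the without-replacement correction is cleanest handled not configuration-by-configuration but by the coupling bound that the total variation distance between with- and without-replacement sampling of $n$ indices from $[\Nbig]$ is at most $\binom{n}{2}/\Nbig$, which is uniform over the structure and kills the correction in one stroke; and the independence of the size-biased labeling from the sampled indices, which you flag as your ``main obstacle,'' is a hypothesis of the proposition rather than something you must verify --- the question of whether it can be arranged in applications (exchangeable partitions versus Markov-chain excursions) is exactly what the paper's discussion following the proposition addresses, so it lies outside the proof proper.
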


This proposition shows how any exact result for a sample of size $n$ from a $\GEM$ can be turned into the conclusion of a limit theorem for sampling 
from various random combinatorial structures. Just that a double sampling process is involved, much as in Proposition \ref{prop:species}, which acquires further interpretations
in this context.
The sample of size $n$ may be regarded as an initial sample of size $n$, as in Proposition \ref{prop:species}.
Then there needs to be a secondary sample, to generate a size-biased labeling of components, run at least long enough to allocate a label to every component that intersects the initial sample. 
The limit in distribution as $\Nbig \to \infty$ of the list of secondary labels found in the primary sample of size $n$ is then a size $n$ sample from $\GEM(\alpha,\theta)$.
The case of exchangeable partitions is particularly natural, as the initial sample of size $n$ can be taken to be the set $[n]$ instead of a random subset of size $n$, and the secondary labeling of components 
can be taken to be the order of least elements of components, starting the labeling afresh after the initial sample of size $n$, as in Proposition \ref{prop:species}. 
As $\Nbig\to \infty$ there is a negligible difference between this construction and a completely independent size-biased listing of components,  so the
conclusions of the above Proposition are valid in either setup.

For application to the excursions of a Markov chain, given the path of the Markov chain of length $\Nbig$, due to lack of exchangeability, two random samples are required, 
one to choose $n$ sample times from $[\Nbig]$, and the other to assign secondary labels to excursions in their order of discovery by a random permutation of $[\Nbig]$. As in the
exchangeable case, it makes no difference if the second random sample is just a continuation of the first, avoiding the first $n$ elements drawn and continuing without replacement until all $\Nbig$
time points have been covered, and all the excursions found. In this scenario there a tiny probability that the first $n$ time points sampled might find excursions which were not part of the subsequent sample,
and hence to which no label can be assigned, but the probability of this event and all other differences in the distribution of the first $n$ sample labels is negligible in the large $\Nbig$ limit, 
because the assumption of convergence to $\GEM(\alpha,\theta)$ proper frequencies means that with overwhelming probability these first $n$ sample points will fall in some collection of $K_n$ excursions each of
which acquires a significant fraction of the rest of the sample points in $[\Nbig]$ and $\Nbig \to \infty$.

The interpretation in this setting of large $n$ limit theorems for sampling a $\GEM$ distribution is not immediate.
But such interpretations might still be made with adequate provision of a limit regime with both $n$ and $\Nbig$ tending to infinity. 
To adequately justify such a double limit theorem, some estimate of the adequacy of the approximation of $\Nbig^{-1}\Nstar_{\bullet: \Nbig}$ by $\GEM(\alpha,\theta)$ 
would be required, such as that provided in \cite{MR2195574} 
for the random permutation statistics approaching $\GEM(0,1)$.

\section{The maximum of a sample from a random discrete distribution}
\label{sec:max}

This section develops some general formulas for the distribution of the maximum of a sample from a
random discrete distribution on positive integers. These formulas allow us to check 
Corollary \ref{crl:Max} 
without appeal to Ignatov's representation
of $\GEM(0,\theta)$. Our interest in this approach is that it at least gives us an explicit if difficult formula for the distribution of the
maximum of a sample from $\GEM(\alpha,\theta)$.

We begin with a well known representation of the probability generating function of a discrete random variable in terms of its tail probabilities.

\begin{lemma}[{\cite[p.~265, Theorem 1]{MR0228020}}]
\label{lem:Fel}
For $X$ a non-negative integer valued random variable, the
probability generating function 
\[
\BE z^X := \sum_{n = 0}^\infty \BP[ X = n  ]z^{n} 
\]
may be represented for $|z| < 1$ as
\begin{align}
\label{feller1} \BE z^{X} &{}=   1 - (1-z) \sum_{m=0}^\infty \BP[ X > m) z^m \\
\label{feller2} &{}=  (1-z) \sum_{m=0}^\infty \BP[ X \le m) z^m .
\end{align}
\end{lemma}

This allows us to provide the following general expression for the distribution of the maximum of a sample from a random discrete distribution:

\begin{lemma}\label{lem:Mn}
Let $M_n = \max_{1 \le k \le n} X_k$ for a sequence of exchangeable positive integer valued random variables $X_1,\ldots,X_n$ which are conditionally
i.i.d.\  $\Pbul$ given some random discrete distribution $\Pbul$ with $R_k:= 1 - \sum_{j=1}^k P_j \downarrow 0$ a.s. 
Then the probability generating function of $M_n-1$ admits the representation
\begin{equation}
\label{eq:genfunc}
\BE z^{M_n - 1} = (1-z) \sum_{j=0}^n \binom{n}{j } (-1)^j 
\sum_{k=1}^\infty \BE  R_k^j \, z^{k-1}.
\end{equation}
\end{lemma}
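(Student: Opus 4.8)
The plan is to apply Lemma~\ref{lem:Fel} to the random variable $M_n - 1$, which is non-negative and integer valued, and then compute the tail probabilities $\BP[M_n - 1 \le m] = \BP[M_n \le m+1]$ explicitly by conditioning on $\Pbul$. Using representation \eqref{feller2}, I would write
\[
\BE z^{M_n - 1} = (1-z) \sum_{m=0}^\infty \BP[M_n \le m+1]\, z^m = (1-z) \sum_{k=1}^\infty \BP[M_n \le k]\, z^{k-1},
\]
reindexing with $k = m+1$. The whole problem thus reduces to finding a usable expression for the tail $\BP[M_n \le k]$.

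First I would compute $\BP[M_n \le k]$ by conditioning on $\Pbul$. Since the $X_i$ are conditionally i.i.d.\ $\Pbul$, the event $\{M_n \le k\}$ means every sample value lands in $\{1, \ldots, k\}$, which given $\Pbul$ has conditional probability $(\sum_{j=1}^k P_j)^n = (1 - R_k)^n$. Taking expectations,
\[
\BP[M_n \le k] = \BE (1 - R_k)^n.
\]
This is where the binomial expansion enters: I would expand $(1 - R_k)^n = \sum_{j=0}^n \binom{n}{j} (-1)^j R_k^j$ by the binomial theorem, then interchange the finite sum over $j$ with the expectation to get $\BP[M_n \le k] = \sum_{j=0}^n \binom{n}{j}(-1)^j \BE R_k^j$. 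Substituting this into the reindexed generating function and exchanging the (now finite) sum over $j$ with the sum over $k$ yields exactly \eqref{eq:genfunc}.

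The main obstacle I anticipate is justifying the interchange of the sum over $k$ with the sum over $j$ (equivalently, controlling convergence of $\sum_{k=1}^\infty \BE R_k^j\, z^{k-1}$ for each $j$), and confirming that the outer factor $(1-z)$ and the reindexing are handled cleanly for $|z| < 1$. Since $R_k \downarrow 0$ almost surely and $0 \le R_k \le 1$, each $\BE R_k^j$ lies in $[0,1]$, so each inner series $\sum_{k} \BE R_k^j\, z^{k-1}$ converges absolutely for $|z| < 1$ by comparison with the geometric series; the sum over $j$ is finite, so no delicate Fubini argument is needed beyond this bound. The reindexing from $m$ to $k$ and the validity of \eqref{feller2} for $M_n - 1$ require only that $M_n < \infty$ almost surely, which follows from $R_k \downarrow 0$. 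I would close by noting that the $j=0$ term contributes $(1-z)\sum_{k\ge 1} z^{k-1} = 1$, consistent with $\BP[M_n\le k]\to 1$, which serves as a useful sanity check on the signs and indexing.
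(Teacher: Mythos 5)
Your proposal is correct and follows essentially the same route as the paper: apply \eqref{feller2} to $M_n-1$, evaluate $\BP[M_n\le k]$ by conditioning on $\Pbul$ to get $\BE(1-R_k)^n$, and expand by the binomial theorem. The paper's proof is terser (it leaves the convergence and interchange of sums implicit as ``follows easily''), while you spell out the justification, which is a harmless and welcome addition.
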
 

\begin{proof} 
We apply \eqref{feller2} to $X = M_n - 1$.
For $k = 1,2, \ldots$ the term for $m = k-1$ is evaluated by taking expectations in the following identity:
\[
\BP[M_n-1 \le k-1|\Pbul] =\BP[M_n\le k|\Pbul] =(1- R_k)^n =\sum_{j=0}^n\binom{n}{j}(-1)^j R_k^j  .
\]
Now \eqref{eq:genfunc} follows easily from \eqref{feller2}.
\vskip-1.5ex
\end{proof}

Since the $\GEM(\alpha,\theta)$ model makes 
the $ 1 - H_i$ independent with beta$ (\theta + i \alpha, 1 - \alpha)$ distributions, for $j = 0,1, \ldots$
\[
\BE _{\alpha,\theta}( 1 - H_i)^j = \frac{B(\theta+i\alpha+j,1-\alpha)}{B(\theta+i\alpha,1-\alpha)}
= \frac{ ( \theta + i \alpha )_j }{ (\theta  + (i-1) \alpha  + 1)_j }
\]
hence the $\GEM(\alpha,\theta)$ tail moment formula
\begin{equation}
\label{pdtails}
\BE _{\alpha,\theta} R_k^j  = \prod_{i= 1}^k \frac{ ( \theta + i \alpha )_j }{ (\theta  + (i-1) \alpha  + 1)_j }  .
\end{equation}
Thus we obtain:

\begin{proposition}
The probability generating function of $M_n-1$ for the maximum $M_n$ of a sample of size $n$ from $\GEM(\alpha,\theta)$ is given by formula
\eqref{eq:genfunc} for the $\GEM(\alpha,\theta)$ tail moments  \eqref{pdtails}.
\end{proposition}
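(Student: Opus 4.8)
The plan is to specialize Lemma~\ref{lem:Mn} to the $\GEM(\alpha,\theta)$ model, so that the only work is to check that the hypotheses of that lemma apply and then to insert the tail moments \eqref{pdtails} into the general representation \eqref{eq:genfunc}. First I would confirm that a size $n$ sample from $\GEM(\alpha,\theta)$ falls within the scope of Lemma~\ref{lem:Mn}: by construction the sample is conditionally i.i.d.\ from the random frequencies $\Pbul$ given $\Pbul$, hence exchangeable, and the proper residual allocation assumption \eqref{properhazards} forces $R_k = \prod_{i=1}^k (1 - H_i) \downarrow 0$ almost surely, since $-\log R_k = \sum_{i=1}^k -\log(1-H_i) \uparrow \infty$ a.s. Thus the hypotheses hold and the representation \eqref{eq:genfunc} is available for this model, with all the convergence issues already handled inside Lemma~\ref{lem:Mn}.

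Next I would justify the substitution of the tail moments. Because the $\GEM(\alpha,\theta)$ factors $H_i$ are independent, the expectation of $R_k^j = \prod_{i=1}^k (1-H_i)^j$ factors as the product $\prod_{i=1}^k \BE_{\alpha,\theta}(1-H_i)^j$. Each factor is a beta moment computed from the density \eqref{eq:Y}, namely the ratio $B(\theta+i\alpha+j,1-\alpha)/B(\theta+i\alpha,1-\alpha)$, which simplifies to the Pochhammer ratio appearing in \eqref{pdtails}. Inserting this product into \eqref{eq:genfunc} yields precisely the asserted expression for $\BE_{\alpha,\theta} z^{M_n - 1}$.

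Since both Lemma~\ref{lem:Mn} and the moment computation leading to \eqref{pdtails} are already in hand, I expect no genuine obstacle: the statement is a direct specialization. The only points needing any care are the almost sure vanishing of $R_k$, which is immediate from \eqref{properhazards}, and the bookkeeping of the beta-function ratio, both of which are routine.
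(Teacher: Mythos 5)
Your proposal is correct and follows the paper's own route exactly: specialize Lemma~\ref{lem:Mn}, compute $\BE_{\alpha,\theta} R_k^j$ as a product of beta moments using independence of the factors $H_i$, and substitute into \eqref{eq:genfunc}. Your explicit check that $R_k \downarrow 0$ almost surely via \eqref{properhazards} is a detail the paper leaves implicit, but otherwise the two arguments coincide.
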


For $j=1$ the product in \eqref{pdtails} gives the tail probability formula
\begin{equation}
\label{tailprob}
\BP _{\alpha,\theta}[X_1 > k ] = \prod_{i= 1}^k \frac{  \theta + i \alpha }{  1 + \theta  + (i-1) \alpha } 
\end{equation}
for $X_1$ a sample of size $1$ from $\GEM(\alpha,\theta)$. For $\alpha = 0$ the product reduces to $(\theta/(1 + \theta))^k$.
This geometric distribution of $X_1$ with parameter $1/(1 + \theta)$ was indicated by 
Engen~\cite{MR0411097}
in the context of 
ecological models.  Formula 
\eqref{eq:genfunc}
in this case reduces easily to the familiar formula for the probability generating function of the geometric distribution of $X_1 - 1$ on non-negative integers, 
\[
\BE _{0,\theta} z^{X_1 - 1} = \frac{ 1 }{ 1 - \theta(z-1) } = 1 + \theta (z-1) + \theta^2 (z-1)^2 + \cdots
\]
whose binomial moments
can be read from the expansion in powers of $(z-1)$:
\begin{equation}
\label{geomoms}
\BE _{0,\theta} {X_1 - 1 \choose k } = \theta^k   \qquad (k = 0,1, \ldots ).
\end{equation}
For $0 < \alpha < 1$ the tail probabilities  
\eqref{tailprob} may be recognized as the terms of a hypergeometric series. This allows the
following evaluation in terms of the Gaussian hypergeometric function ${}_2F_1$:
\[
\BE _{\alpha,\theta} z^{X_1 - 1} = 1 - \frac{(\alpha + \theta)}{ (1 + \theta)}  \, (1-z) \, 
\setlength\arraycolsep{1pt}
{}_2 F_1\left(\begin{matrix} 1, \, 2 + \theta/\alpha  \\
1 + (1+\theta)/\alpha\end{matrix};z\right)
\]
with associated binomial moments 
\begin{equation}
\label{hypermoms}
\BE _{\alpha,\theta} {X_1 - 1 \choose k } = \prod_{i=1}^k \frac{ \theta + i \alpha }{ 1 - (i+1) \alpha } \qquad \mbox{ if } 0 \le  \alpha < \frac{1}{ k+1}
\end{equation}
and $\infty$ otherwise.  Note that \eqref{hypermoms} reduces correctly to \eqref{geomoms} for $\alpha = 0$.
The case $k=1$ of \eqref{hypermoms} is due to Kingman \cite[(18)]{MR0368264} 
for $\alpha = 0$ and \cite[(58)]{MR0368264} for $\theta = 0$. 
The case of \eqref{hypermoms} for $\theta = 0$ and general $k = 1,2, \ldots$ was recently derived by 
Leisen, Lijoi, and Paroissin \cite[Proposition 1]{MR2845896} 
using a much more difficult approach.

For $j  > 1$ only in the case $\alpha = 0$ does there seem to be much simplification in \eqref{eq:genfunc}. Then we can proceed as follows:

\begin{proof}[Computational proof of Corollary~\ref{crl:Max}]
For $\alpha = 0$ in \eqref{pdtails} we find that
\[
\BE _{0,\theta} R_k^j  = \left( \frac{\theta } { \theta + j } \right)^k
\]
and the series in \eqref{eq:genfunc} becomes
\[
\sum_{k=1}^\infty \BE  _{0,\theta} R_k^j z^{k-1} = z^{-1} \sum_{k=1}^\infty  \left( \frac{\theta  z } { \theta + j } \right)^k =  \frac{ \theta }{ j + \theta(1-z)}
\]
hence
\begin{equation}\label{eq:genfM}
\BE_{0,\theta} z^{M_n - 1 } = ( 1 -z ) \sum_{j=0}^n \binom{n}{ j } \frac{ (-1)^j \theta }{ j + \theta (1 - z ) } = \prod_{i= 1}^n \frac{ i } { i + \theta ( 1 - z) }\,.
\end{equation}
The last equality is the well-known partial fraction decomposition (see, e.g.~\cite[Eq.~5.41]{MR1001562})
\begin{equation}\label{eq:partfr}
\frac{1}{(x)_{n+1}}=\frac{1}{n!}\sum_{j=0}^n\binom{n}{j}\frac{(-1)^j}{x+j}
\end{equation}
which can be verified, for instance, by multiplying \eqref{eq:partfr} by $x+k$ and plugging in $x=-k$ for $k=0,1,\ldots,n$.
Since the factors in the right-hand side of \eqref{eq:genfM} are the probability generating functions of geometric variables $G_i$ with parameters $i/(i+\theta)$,
the conclusion of Corollary~\ref{crl:Max} follows.
\end{proof}


\begin{remark}
Looking on the form of \eqref{eq:Mnassum} it is tempting to suppose that $G_n$ is the difference $M_n-M_{n-1}$ and is independent of $M_{n-1}$. However this is
not the case, because the new sample $U_{n+1}$ gets into an arbitrary position~$\ell$ in the order statistics and hence changes 
a value of $G_{n-\ell}$.  Moreover, unlike the independent case, the successive maxima do not form a Markov chain. Heuristically, this happens because 
knowledge of the history provides some information about the realization of $\mathbf{Y}$. It can be shown, for instance, that
$\BP\bigl[M_1=j,M_2=M_3=\ell|M_1=j,M_2=\ell\bigr]$ for $j<\ell$ depends on $j$, but we omit this calculation.
A similar issue arises in the identity in distribution \eqref{expid} relating the distribution of the maximum $M_n$ of $n$ i.i.d.\  exponential variables to the sum 
$T_n$ of scaled exponentials.  But that identity fails to hold jointly as $n$ varies for a more obvious reason: $\BP(M_n = M_{n-1}) > 0$ while $\BP(T_n = T_{n-1}) = 0$.
\end{remark}

\section{A generalization in the $\GEM(0,\theta)$ case}\label{sec:Ntheta}
The result of Corollary \ref{crl:Max} can be generalized as follows.
According to \eqref{xue}, the $\GEM(0,\theta)$ model makes $M_n - 1 = N_F (0, U_{n:n}]$ a sum of independent geometrics, 
where $N_F:= (N_F(0,u], 0 \le u < 1)$ is the $\GEM(0,\theta)$ barrier process, which is Poisson  with intensity $\theta (1-u)^{-1} du$ at $u \in (0,1)$,
and $U_{n:n}$ is independent of $N_F$.
Instead of $N_F (0,U_{n:n}]$, consider $N_F (0,\beta]$ for $\beta$ with a suitable beta distribution, independent of $N_F$.

\begin{theorem}\label{thm:Gsum2}
For $n\in\BN$ and $\theta,b>0$, let $\beta_{n,b}$ with the\/ {\upshape beta}$(n,b)$ density at $u$ proportional to $u^{n-1} (1-u)^{b-1}$
be independent of the $\GEM(0,\theta)$ barrier process $N_F$.
Then 
\begin{equation}\label{eq:Gsum2}
N_F (0,\beta_{n,b}]\deq \sum_{i= 1}^n G_i (b,\theta)
\end{equation}
where the $G_i(b,\theta)$ are independent with geometric$(\ptau_i(b,\theta))$ distributions, for
\begin{equation}\label{eq:taubtheta}
\ptau_i(b,\theta) := \frac{ b + i - 1 }{ b + i - 1 + \theta } .
\end{equation}
\end{theorem}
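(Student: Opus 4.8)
The plan is to identify the two sides of \eqref{eq:Gsum2} through their probability generating functions on $z\in[0,1)$, which determine the law of a non-negative integer valued variable uniquely. First I would condition on the value of the independent beta variable. Since $N_F$ is Poisson with intensity $\theta(1-u)^{-1}\,du$, conditionally on $\beta_{n,b}=u$ the count $N_F(0,u]$ is Poisson with mean $\int_0^u \theta(1-v)^{-1}\,dv=-\theta\log(1-u)$, so its conditional generating function is
\[
\E\bigl[z^{N_F(0,u]}\bigr]=\exp\bigl(\theta(1-z)\log(1-u)\bigr)=(1-u)^{\theta(1-z)}.
\]

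Next I would average this over the beta$(n,b)$ law of $\beta_{n,b}$ and recognize a beta integral:
\[
\E z^{N_F(0,\beta_{n,b}]}=\frac{1}{B(n,b)}\int_0^1 u^{n-1}(1-u)^{b-1+\theta(1-z)}\,du=\frac{B\bigl(n,b+\theta(1-z)\bigr)}{B(n,b)},
\]
which is legitimate for $\Re z<1$, so that the exponent $b-1+\theta(1-z)$ keeps a positive real part and the integral converges. Writing this ratio of beta functions as a ratio of Pochhammer symbols gives $(b)_n/\bigl(b+\theta(1-z)\bigr)_n$, which telescopes into $\prod_{i=1}^n (b+i-1)/\bigl(b+i-1+\theta(1-z)\bigr)$. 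Each factor is exactly the generating function of a geometric$(\ptau_i(b,\theta))$ variable with $\ptau_i(b,\theta)$ as in \eqref{eq:taubtheta}, since $\ptau/(1-(1-\ptau)z)=(b+i-1)/(b+i-1+\theta(1-z))$ for that choice of $\ptau$. By independence the generating function of $\sum_{i=1}^n G_i(b,\theta)$ is the same product, so the two sides of \eqref{eq:Gsum2} agree.

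No step is a genuine obstacle: the only delicate points are checking convergence of the beta integral for $z\in[0,1)$ and carefully telescoping the gamma ratio, both routine. I would also record the more \emph{probabilistic} route that parallels Theorem~\ref{thm:Mn} and explains the answer, since it makes the appearance of the shifted geometric parameters \eqref{eq:taubtheta} transparent. Mapping $(0,1)$ to $(0,\infty)$ by $s=-\log(1-u)$ turns $N_F(0,\beta_{n,b}]$ into $N_S(0,-\log(1-\beta_{n,b})]$ for the homogeneous rate-$\theta$ process $N_S$ of Lemma~\ref{lem:poisson}. Using the beta-product decomposition of the beta$(b,n)$ law of $1-\beta_{n,b}$ into independent beta$(b+i-1,1)$ factors, $-\log(1-\beta_{n,b})$ becomes a sum of independent exponentials with rates $b+i-1$; splitting the Poisson count of $N_S$ over the corresponding consecutive independent intervals yields $n$ independent pieces, each a Poisson$(\theta t)$ mixed over an exponential $t$, hence geometric with parameter equating means, exactly as in the proof of Theorem~\ref{thm:Mn}. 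This reproduces $\ptau_i(b,\theta)=(b+i-1)/(b+i-1+\theta)$ and recovers Corollary~\ref{crl:Max} at $b=1$.
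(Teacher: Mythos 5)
Your main argument is essentially the paper's own proof: the paper likewise conditions on the beta variable to get the mixed-Poisson generating function $\E(1-\beta_{n,b})^{\theta(1-z)}$, evaluates this beta moment as a ratio of gamma functions (your beta-integral computation is the same calculation in different notation), and factors the resulting Pochhammer ratio into the $n$ geometric generating functions. So the core of your proposal is correct and matches the paper.

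Your supplementary probabilistic route, however, is a genuine addition that the paper does not carry out. It rests on the decomposition of the $\dbeta(b,n)$ law of $1-\beta_{n,b}$ into a product of $n$ independent $\dbeta(b+i-1,1)$ factors, so that $-\log(1-\beta_{n,b})$ becomes a sum of independent exponentials with rates $b+i-1$; splitting the homogeneous rate-$\theta$ process $N_S$ of Lemma~\ref{lem:poisson} over the corresponding consecutive intervals then yields the $n$ independent geometric counts directly, exactly as in the proof of Theorem~\ref{thm:Mn}. This buys a structural explanation of where the parameters $\ptau_i(b,\theta)=(b+i-1)/(b+i-1+\theta)$ come from, and it even exhibits the $G_i(b,\theta)$ as actual increments of $N_F$ over a random partition of $(0,\beta_{n,b}]$, rather than merely matching transforms; the paper's transform argument is shorter but leaves the geometric structure looking like an algebraic accident. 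Both routes are valid, and your convergence caveat for the beta integral ($\Re z<1$ keeps $b-1+\theta(1-z)$ with positive real part) is handled correctly.
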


\begin{proof}
Consider first $N_F (0,W]$ where $W$ is a random variable with some arbitrary distribution on $[0,1]$, independent of $N_F$.
For  $W = u$ fixed, the distribution of $N_F (0,u]$ is Poisson$ ( - \theta \log ( 1 - u ) )$ with the probability generating function 
\[
\BE z^{N_F(0,u] } =   \exp \left[ - (1 - z ) ( - \theta \log ( 1- u ) \right] = (1 - u ) ^{\theta (1-z)}\,.
\]
For general $W$ the distribution of $N_F (0,W]$ ranges over all mixed Poisson distributions. 
Explicitly, the probability generating function of $W$ is
\[
\BE z^{N_F(0,W] } =  \BE(1 - W ) ^{\theta (1-z)}.
\]
In particular, if $W = \beta_{a,b}$ has the beta$(a,b)$ distribution then
\begin{align*}
\BE z^{N_F(0,\beta_{a,b}] } &{}=  \BE(1 - \beta_{a,b} ) ^{\theta (1-z)} = \BE\beta_{b,a} ^{\theta (1-z)} \\
&{}= \frac{ \Gamma( b + \theta(1-z) )  } { \Gamma( a + b + \theta(1-z) ) } \frac{ \Gamma( a + b ) }{\Gamma(b) }.
\end{align*}
If $a = n$ is a positive integer, then $\frac{ \Gamma(n + b)}{ \Gamma(b) } = (b)_n:= \prod_{i = 1}^n (b + i - 1)$
so
\[
\BE z^{ N_F(0,\beta_{n,b}] } = \prod_{i=1}^{n} \frac{ b + i - 1 } { b + i - 1 + \theta(1-z) } =  \prod_{i=1}^{n} \frac{ \ptau_i(b,\theta)   } {  1 - (1- \ptau_i (b,\theta)) z }
\]
for $\ptau_i(n,\theta)$ as in~\eqref{eq:taubtheta}. Since the $i$-th factor is the probability generating function for 
the geomet\-ric($\ptau_i(n,\theta)$) distribution, the claim~\eqref{eq:Gsum2} follows.
\end{proof}

\begin{remark}
Notice that $U_{n,n} \deq \beta_{n,1}$,  
so \eqref{eq:Gsum2} is a generalization of \eqref{eq:Mnassum}.
\end{remark}

\section{The maximum of a $\GEM(\alpha,\theta)$ sample for $0<\alpha<1$}
\label{sec:max2}

The technique of the previous sections does not seem to work for the case $0<\alpha<1$. However the asymptotics of the $\GEM$
distribution in this case are known sufficiently well to find the asymptotic behavior of $M_n$ as $n\to\infty$. In particular,
it is known that $\GEM(\alpha,\theta)$ frequencies $P_i$ almost surely decay as random factors of $i^{-1/\alpha}$. Similar 
behavior is also known for the sampling from a random branching process model introduced by Robert and Simatos~\cite{MR2541191}
where different but similar aspects of samples are studied, such as the limit behavior of the first unoccupied box, as the sample size grows.

A key role in the study of the $\GEM(\alpha,\theta)$ distribution for the case $0<\alpha<1$ is played by the notion
of the $\alpha$-diversity of the exchangeable sample. 
It is known \cite[Th.~3.8]{MR2245368} that for $K_n$ defined by \eqref{kndef} from a $\GEM(\alpha,\theta)$  sample
with $0<\alpha<1$ and $\theta>-\alpha$ there exists a limit
\[
\lim_{n\to\infty}\frac{K_n}{n^\alpha}=D_{\alpha}>0 \mbox{ almost surely } (\BP_{\alpha, \theta} )
\]
and also in $p$\/-th mean for every $p>0$. The distribution of the limiting random variable 
$D_{\alpha}$, 
which depends on $\theta$,  is known as the \textit{$\alpha$-diversity} and is
determined by its moments
\begin{equation}\label{eq:diver}
\BE_{\alpha,\theta} D_\alpha ^p=\frac{\Gamma(\theta+1)}{\Gamma(\tfrac{\theta}{\alpha}+1)}\,\frac{\Gamma(p+\tfrac{\theta}{\alpha}+1)}{\Gamma(p\alpha+\theta+1)}\,.
\end{equation}
Moreover, the $\alpha$-diversity $D_{\alpha}$ is  a.s.\ determined by $\Pbul$ and 
\[
\BP_{\alpha,\theta} [X_1 > k|\Pbul]\sim \alpha D_{\alpha,\theta}^{1/\alpha}\, k^{1-1/\alpha}
\mbox{ almost surely } (\BP_{\alpha, \theta} )\mbox{ as } k\to\infty,
\]
see \cite[Sec.~10]{MR2318403} or \cite[Lemma~3.11]{MR2245368}.
For such a power law it is well known that the maximum of an independent sample of size $n$ converges in distribution
to the Fr\'echet distribution. Namely, writing for short $\gamma=1/\alpha-1$, for any fixed $x>0$
\begin{align*}
\BP_{\alpha,\theta} \bigl[M_n\le xn^{1/\gamma}\bigm|\Pbul \bigr]&{}=\bigl(1-\BP_{\alpha,\theta}\bigl[X_1 > xn^{1/\gamma}\bigm|\Pbul \bigr]\bigr)^n\\
&{}\sim \left(1-\alpha D_{\alpha,\theta}^{1/\alpha}\frac{x^{-\gamma}}{n}\right)^n\mbox{ almost surely } (\BP_{\alpha, \theta} )\\
&{}\to \exp\bigl(-\alpha D_{\alpha,\theta}^{1/\alpha}x^{-\gamma}\bigr),\qquad n\to\infty.
\end{align*}
Hence, by integration with respect to the distribution of the $\alpha$-diversity, we have the following result.

\begin{theorem}\label{thm:Mnalpha}
Let $M_n$ be the maximum of a size $n$ $\GEM(\alpha,\theta)$ exchangeable sample with $0<\alpha<1$ and $\theta>-\alpha$. Then
for each $x>0$
\begin{equation}\label{eq:Mndistr}
\BP_{\alpha,\theta} \bigl[M_n\le xn^{\alpha/(1-\alpha)}\bigr]\to \BE_{\alpha,\theta} \exp\bigl(-\alpha D_{\alpha}^{1/\alpha}x^{-(1-\alpha)/\alpha}\bigr) \mbox{ as }  n\to\infty.
\end{equation}
\end{theorem}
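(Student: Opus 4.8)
The plan is to condition on the random frequencies $\Pbul$, establish that the conditional law of $M_n$ converges almost surely to a Fréchet law (with a scale parameter built from the $\alpha$-diversity), and then recover the unconditional limit by integrating against the law of $D_\alpha$. All of the analytic content is in fact already assembled in the display preceding the statement; what remains for a clean proof is to organize it and to justify the interchange of limit and expectation.

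First I would fix $x>0$ and write $\gamma:=(1-\alpha)/\alpha$, so that $n^{\alpha/(1-\alpha)}=n^{1/\gamma}$ and the target exponent is $x^{-\gamma}$. Conditionally on $\Pbul$ the sample $X_1,\dots,X_n$ is i.i.d., so the conditional distribution function of the maximum factorizes,
\[
\BP_{\alpha,\theta}\bigl[M_n\le xn^{1/\gamma}\bigm|\Pbul\bigr]=\bigl(1-\BP_{\alpha,\theta}\bigl[X_1>xn^{1/\gamma}\bigm|\Pbul\bigr]\bigr)^n .
\]
Into this I would feed the almost-sure power-law tail $\BP_{\alpha,\theta}[X_1>k\mid\Pbul]\sim\alpha D_{\alpha}^{1/\alpha}k^{-\gamma}$ as $k\to\infty$, quoted from \cite[Sec.~10]{MR2318403} and \cite[Lemma~3.11]{MR2245368}, evaluated at $k_n:=xn^{1/\gamma}\to\infty$. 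Since $k_n^{-\gamma}=x^{-\gamma}/n$, the single-sample tail equals $n^{-1}c_n$ with $c_n\to c:=\alpha D_{\alpha}^{1/\alpha}x^{-\gamma}$ almost surely, and the elementary limit $(1-c_n/n)^n\to e^{-c}$, valid whenever $c_n\to c$, gives
\[
\BP_{\alpha,\theta}\bigl[M_n\le xn^{1/\gamma}\bigm|\Pbul\bigr]\longrightarrow\exp\bigl(-\alpha D_{\alpha}^{1/\alpha}x^{-\gamma}\bigr)\qquad\text{a.s.\ $(\BP_{\alpha,\theta})$.}
\]
The final step is to integrate this conditional statement over $\Pbul$. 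Because each conditional distribution function lies in $[0,1]$, the bounded convergence theorem applies at once and yields
\[
\BP_{\alpha,\theta}\bigl[M_n\le xn^{1/\gamma}\bigr]=\BE_{\alpha,\theta}\,\BP_{\alpha,\theta}\bigl[M_n\le xn^{1/\gamma}\bigm|\Pbul\bigr]\longrightarrow\BE_{\alpha,\theta}\exp\bigl(-\alpha D_{\alpha}^{1/\alpha}x^{-\gamma}\bigr),
\]
which is exactly \eqref{eq:Mndistr}; the right-hand side is the expectation of a $(0,1]$-valued function of $D_\alpha$, hence automatically finite, and the law of $D_\alpha$ against which we integrate is the one pinned down by the moments \eqref{eq:diver}.

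I expect the genuine substance of the argument to lie entirely in the imported tail asymptotic rather than in the probabilistic bookkeeping: the whole theorem rests on the nontrivial almost-sure decay $\BP_{\alpha,\theta}[X_1>k\mid\Pbul]\sim\alpha D_{\alpha}^{1/\alpha}k^{-\gamma}$, which does the heavy lifting. Once that is granted, conditioning, the factorization, and the passage to the expectation are routine; the only point needing mild care is that the coefficient $c_n$ in $(1-c_n/n)^n$ is itself random and only \emph{converges} to $c$ along the path, so one should invoke the limit in the form $c_n\to c\Rightarrow(1-c_n/n)^n\to e^{-c}$ rather than plug in a fixed constant. No uniformity in $x$ is required, since the claim is a pointwise-in-$x$ convergence of distribution functions.
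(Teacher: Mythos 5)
Your proposal is correct and follows essentially the same route as the paper: condition on $\Pbul$, factorize the conditional law of $M_n$, insert the almost-sure tail asymptotic $\BP_{\alpha,\theta}[X_1>k\mid\Pbul]\sim\alpha D_{\alpha}^{1/\alpha}k^{1-1/\alpha}$, pass to the Fr\'echet-type limit via $(1-c_n/n)^n\to \re^{-c}$, and integrate out $D_\alpha$. The only difference is cosmetic: you make explicit the bounded convergence step that the paper compresses into ``by integration with respect to the distribution of the $\alpha$-diversity.''
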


\begin{remark}
For the case $\alpha=0$ the asymptotic result $K_n\sim M_n\sim \theta\log n$ almost surely  $(\BP_{0, \theta} )$ 
of \cite{MR2538083} 
shows that asymptotically $K_n$ and $M_n$ have the same behavior. For $\alpha>0$ the situation
is different: $K_n$ should be divided by $n^{\alpha}$ to get a proper limit, and $M_n$ grows much faster as a random factor of $n^{\alpha/(1-\alpha)}$.
\end{remark}

Note that \eqref{eq:Mndistr} expresses the cumulative distribution function of $\lim n^{-\alpha/(1-\alpha)}M_n$ 
evaluated at $x$ as the 
Laplace transform $\BE_{\alpha,\theta} \bigl[\re^{-y D_{\alpha}^{1/\alpha}}\bigr]$ evaluated at $y=\alpha x^{-(1-\alpha)/\alpha}$.
Since the $\BP_{\alpha,\theta}$ moments of $D_{\alpha}$ given by \eqref{eq:diver} determine its distribution, we can obtain an explicit but clumsy
expression for the limiting distribution function \eqref{eq:Mndistr}. 

\begin{theorem}\label{prop:Mnalphaasint}
For the $\BP_{\alpha,\theta}$ distribution of $D_{\alpha}$ determined by the moment function \eqref{eq:diver},
\begin{align}\label{eq:Mndistrasint}
\notag
\BE_{\alpha,\theta} \exp&{}\bigl(-\alpha D_{\alpha,\theta}^{1/\alpha}x^{-(1-\alpha)/\alpha}\bigr)\\
&\!\!{}=\frac{2\alpha^{1-\theta-\alpha}\, \Gamma(\theta+1)}{\Gamma(\tfrac{\theta}{\alpha}+1)}
x^{(1-\alpha)(\theta/\alpha+1)}\int_0^\infty v^{\theta+2\alpha-1}\re^{-(v^2/\alpha)^\alpha x^{1-\alpha}} J_\theta(2v)\,dv,
\end{align}
where $J_\theta$ is the Bessel function.
\end{theorem}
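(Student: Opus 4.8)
The plan is to identify the left-hand side as a Laplace transform and to prove \eqref{eq:Mndistrasint} by matching \emph{Mellin} transforms, rather than by matching power-series coefficients. Write $W := D_\alpha^{1/\alpha}$ and $L(y) := \BE_{\alpha,\theta}\,\re^{-yW}$, so that the quantity to be evaluated is $L(y)$ at $y = \alpha x^{-(1-\alpha)/\alpha}$. The tempting first attempt is to expand $\re^{-yW} = \sum_k (-y)^k W^k/k!$ and $J_\theta(2v) = \sum_m (-1)^m v^{2m+\theta}/(m!\,\Gamma(m+\theta+1))$, take expectations using \eqref{eq:diver} on the left and integrate term by term on the right. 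A short computation, in which the $v$-integral reduces to a Gamma integral after the substitution $u=v^2$, shows that both sides produce the \emph{same} formal series, a constant multiple of $\sum_m \frac{(-y)^m}{m!\,\Gamma(m+\theta+1)}\,\Gamma\bigl(\tfrac{m+\theta}{\alpha}+1\bigr)$. This already reveals the origin of the prefactor $2\alpha^{1-\theta-\alpha}\Gamma(\theta+1)/\Gamma(\theta/\alpha+1)$ and of the power $x^{(1-\alpha)(\theta/\alpha+1)}$.

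The obstacle is that this matching is only formal. By Stirling the moments $\BE_{\alpha,\theta} W^k = \BE_{\alpha,\theta} D_\alpha^{k/\alpha}$ grow like $\exp\bigl((\tfrac1\alpha-1)\,k\log k\bigr)$, so for $\alpha \le \tfrac12$ the series above diverges for every $y\ne 0$, and $W$ need not even be determined by its integer moments; hence coefficient matching cannot by itself establish the identity. I would therefore work instead with the negative moments, which are genuinely small, and with Mellin transforms, for which there is a uniqueness theorem. For $\Re s$ in a suitable vertical strip, Fubini gives $\widetilde L(s) := \int_0^\infty y^{s-1}L(y)\,dy = \Gamma(s)\,\BE_{\alpha,\theta} W^{-s}$, and \eqref{eq:diver} with $p=-s/\alpha$ turns $\BE_{\alpha,\theta}W^{-s}$ into an explicit ratio of Gamma functions.

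On the right-hand side I would compute the Mellin transform in $x$ of $A(x) := \int_0^\infty v^{\theta+2\alpha-1}\re^{-(v^2/\alpha)^\alpha x^{1-\alpha}}J_\theta(2v)\,dv$. The inner $x$-integral is the Gamma integral $\int_0^\infty x^{\sigma-1}\re^{-bx^{1-\alpha}}\,dx = (1-\alpha)^{-1}b^{-\sigma/(1-\alpha)}\Gamma(\sigma/(1-\alpha))$ with $b=(v^2/\alpha)^\alpha$, and the remaining $v$-integral is the classical Mellin transform of the Bessel function, $\int_0^\infty v^{\rho-1}J_\theta(2v)\,dv = \tfrac12\,\Gamma(\tfrac{\rho+\theta}{2})/\Gamma(\tfrac{\theta-\rho}{2}+1)$. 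Incorporating the power prefactor $x^{(1-\alpha)(\theta/\alpha+1)}$ as a shift of $\sigma$, and the change of variables $y=\alpha x^{-(1-\alpha)/\alpha}$ as the affine substitution it induces on the Mellin variable together with the factor $\alpha^{\alpha\sigma/(1-\alpha)}$, I would then verify that the two resulting ratios of Gamma functions coincide identically: writing $q=\sigma/(1-\alpha)$ and $q'=q+\theta/\alpha+1$, the arguments $\theta+\alpha-\alpha q'$ and $1-\alpha+\alpha q'$ collapse to $-\alpha q$ and $1+\theta+\alpha q$, matching the reflection factor $\Gamma(-\alpha q)$ and the denominator coming from $\widetilde L$ exactly. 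Uniqueness of Mellin inversion then yields $\frac{2\alpha^{1-\theta-\alpha}\Gamma(\theta+1)}{\Gamma(\theta/\alpha+1)}x^{(1-\alpha)(\theta/\alpha+1)}A(x)=L(\alpha x^{-(1-\alpha)/\alpha})$, which is \eqref{eq:Mndistrasint}.

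The step requiring the most care is the Fubini interchange that produces the Mellin transform of $A$, since the Bessel factor is only conditionally integrable at infinity. I would secure it by noting that for each fixed $x>0$ the stretched-exponential factor $\re^{-(v^2/\alpha)^\alpha x^{1-\alpha}}$ makes $A(x)$ absolutely convergent, and that, using the elementary bound $|J_\theta(2v)|\le C\min(v^\theta,v^{-1/2})$, the double integral $\int_0^\infty\!\int_0^\infty x^{\Re\sigma-1}v^{\theta+2\alpha-1}\re^{-(v^2/\alpha)^\alpha x^{1-\alpha}}|J_\theta(2v)|\,dv\,dx$ is finite for $\Re\sigma$ in an explicit strip. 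Fubini is legitimate there, and, both sides being meromorphic in $\sigma$, the identity of Mellin transforms extends by analytic continuation. This replaces the divergent coefficient matching by a genuine transform identity valid for all $0<\alpha<1$ and $\theta>-\alpha$.
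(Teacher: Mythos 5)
Your strategy is in essence the same as the paper's: both arguments run on Mellin machinery and use exactly the same three ingredients, namely the analytic continuation of \eqref{eq:diver} to complex $p$, the Gamma integral for the stretched exponential $\re^{-bx^{1-\alpha}}$, and the classical Mellin transform $\int_0^\infty v^{\rho-1}J_\theta(2v)\,dv=\tfrac12\,\Gamma\bigl(\tfrac{\rho+\theta}{2}\bigr)/\Gamma\bigl(\tfrac{\theta-\rho}{2}+1\bigr)$. The difference is one of direction. The paper starts from the Mellin--Barnes representation $\re^{-yW}=\frac{1}{2\pi\ii}\int_{c-\ii\infty}^{c+\ii\infty}\Gamma(s)(yW)^{-s}\,ds$ with $W=D_\alpha^{1/\alpha}$, takes expectations to get a contour integral of $\Gamma(s)\,\Gamma\bigl(\tfrac{\theta-s}{\alpha}+1\bigr)/\Gamma(\theta-s+1)\cdot y^{-s}$, and converts that to a real integral by recognizing the integrand as a product of two known Mellin transforms (multiplicative convolution plus inversion). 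You instead compute the Mellin transforms of the two sides of \eqref{eq:Mndistrasint} separately and appeal to uniqueness; your $\widetilde L(s)=\Gamma(s)\,\BE_{\alpha,\theta}W^{-s}$ is precisely the integrand of the paper's contour integral, so the content is identical. Your bookkeeping is also correct: with $q=\sigma/(1-\alpha)$ and $q'=q+\theta/\alpha+1$, both transforms reduce to $\frac{\alpha\,\alpha^{\alpha q}\,\Gamma(\theta+1)}{(1-\alpha)\,\Gamma(\theta/\alpha+1)}\,\Gamma(q')\,\frac{\Gamma(-\alpha q)}{\Gamma(1+\theta+\alpha q)}$, the prefactor matching because $\alpha^{1-\theta-\alpha}\alpha^{\alpha q'}=\alpha^{1+\alpha q}$. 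Your opening diagnosis of why naive coefficient matching fails (divergence of the moment series for $\alpha\le\tfrac12$) is also sound, and is a point the paper does not even pause over.

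There is, however, a genuine gap at the edge of the parameter range. Your Fubini justification uses $|J_\theta(2v)|\le Cv^{-1/2}$ at infinity; after the $x$-integration the $v$-integrand decays like $v^{\Re\rho-3/2}$ with $\Re\rho=2\Re s-\theta$ (where $s$ is the Mellin variable of $\widetilde L$), so absolute convergence of your double integral forces $\Re s<\theta/2+1/4$, while the left-hand transform needs $0<\Re s<\theta+\alpha$. The strip $0<\Re s<\min\bigl(\theta+\alpha,\,\theta/2+1/4\bigr)$ is therefore \emph{empty} whenever $\theta\le-\tfrac12$, and such $\theta$ lie within the theorem's scope, since $\theta>-\alpha$ permits $\theta\in(-\alpha,-\tfrac12]$ once $\alpha>\tfrac12$. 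The paper sidesteps this by working with the fundamental strip of the conditionally convergent Mellin transform of $y^{-\theta/2}J_\theta(2\sqrt{y})$, which extends to $0<\Re s<\theta/2+3/4$ and is nonempty for every $\theta>-\alpha>-1$. Note also that your closing appeal to analytic continuation in $\sigma$ cannot fill this hole: when the strip of validity is empty there is no open set from which to continue. The repair is easy but must be said: either handle the conditional convergence of the oscillatory $v$-integral directly (integration by parts, or an Abelian limit with a factor $\re^{-\eps v}$, $\eps\downarrow0$), or prove the identity for $\theta>-\tfrac12$ exactly as you do and then extend to $\theta\in(-\alpha,-\tfrac12]$ by analytic continuation in $\theta$, both sides of \eqref{eq:Mndistrasint} being analytic in $\theta$ for fixed $x$ and $\alpha$.
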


\begin{proof}
Writing for short $y=\alpha x^{-(1-\alpha)/\alpha}$ we have, for any $c>0$, 
\[
\re^{-yD_{\alpha}^{1/\alpha}}=\frac{1}{2\pi\ii}\int_{c-\ii\infty}^{c+\ii\infty}\Gamma(s)\bigl(yD_{\alpha}^{1/\alpha}\bigr)^{-s}ds
\]
because $\re^{-y}$ and $\Gamma(s)$ form the Mellin pair. We refer to \cite{MR0352890}
for the necessary information about Mellin's transform. By analyticity the expression~\eqref{eq:diver} for $\BP_{\alpha,\theta}$ moments of $D_{\alpha}$ is valid
also for complex $p$ at least with $\Re p>-1-\tfrac{\theta}{\alpha}$.  
Hence taking expectation and applying Fubini's theorem yields
\begin{equation}\label{eq:LaplaceD}
\BE_{\alpha,\theta}\bigl[\re^{-yD_{\alpha,\theta}^{1/\alpha}}\bigr]=\frac{1}{2\pi\ii}\frac{\Gamma(\theta+1)}{\Gamma(\tfrac{\theta}{\alpha}+1)}
\int_{c-\ii\infty}^{c+\ii\infty}\Gamma(s)\,\frac{\Gamma(\tfrac{\theta-s}{\alpha}+1)}{\Gamma(\theta-s+1)}y^{-s}ds
\end{equation}
for $0<c<\alpha+\theta$.
Now, $\Gamma(s)/\Gamma(\theta-s+1)$ is the Mellin transform of $y^{-\theta/2}J_\theta(2\sqrt{y})$ in the fundamental strip $0<\Re s<\tfrac{\theta}{2}+\tfrac{3}{4}$ (\cite[II.5.38]{MR0352890}, where there is a misprint in the right bound) and $\Gamma(\tfrac{\theta-s}{\alpha}+1)$ is the 
Mellin transform of $\alpha y^{-\alpha-\theta}\re^{-y^{-\alpha}}$ for $\Re s<\alpha+\theta$, by the standard transformations
of the Mellin pair $\re^{-y}$ and $\Gamma(s)$. Hence 
 their product in the intersection of fundamental strips is the Mellin transform of the multiplicative convolution and for $0<c<\min\{\tfrac{\theta}{2}+\tfrac{3}{4},\alpha+\theta\}$
by the inversion formula 
\[
\frac{1}{2\pi\ii}
\int_{c-\ii\infty}^{c+\ii\infty}\Gamma(s)\,\frac{\Gamma(\tfrac{\theta-s}{\alpha}+1)}{\Gamma(\theta-s+1)}y^{-s}ds
=\alpha\int_{0}^\infty  (y/u)^{-\alpha-\theta}\re^{-(y/u)^{-\alpha}}u^{-\theta/2}J_\theta(2\sqrt{u})\,\frac{du}{u}\,.
\]
Plugging this into \eqref{eq:LaplaceD}, changing the variable $v=\sqrt{u}$ and 
returning to the variable $x$ yields the result.
\end{proof}

The right-hand side of~\eqref{eq:Mndistrasint} does not seem to allow much simplification for general $\alpha$. For some rational
$\alpha$ Mathematica evaluates this integral in terms of the hypergeometric function. However for $\alpha=1/2$ the integral can be taken explicitly
and leads to a simple expression.  In this case the integral is the Mellin transform of 
$f(v)=\re^{-v\sqrt{2 x}} J_\theta(2v)$ evaluated at $\theta+1$. According to \cite[I.10.7]{MR0352890}
\[
\int_0^\infty v^{s-1}\re^{-v\sqrt{2 x}} J_\theta(2v)\,dv=(2x)^{-(s+\theta)/2}\frac{\Gamma(\theta+s)}{\Gamma(\theta+1)}\,
\setlength\arraycolsep{1pt}
{}_2F_1\left(\begin{matrix}
\tfrac{\theta+s}{2},\,\tfrac{\theta+s+1}{2}\\ \theta+1\end{matrix};-\tfrac{2}{x}\right)\,
\]
and the last expression simplifies for $s=\theta+1$ because
\[
\setlength\arraycolsep{1pt}
{}_2F_1\left(\begin{matrix}a,\,b\\b\end{matrix};z\right)=\sum_{k=0}^{\infty}\frac{(a)_k}{k!}z^k=\frac{1}{(1-z)^a}
\]
for $|z|<1$ and by analyticity also for all $z$ with $\Re z<1$. Hence
\[
\int_0^\infty v^{\theta}\re^{-v\sqrt{2 x}} J_\theta(2v)\,dv=\frac{\Gamma(2\theta+1)}{\Gamma(\theta+1)}\,
\frac{1}{(2x+4)^{\theta+1/2}}
\]
and plugging it into \eqref{eq:Mndistrasint} gives the following result.

\begin{corollary}
Let $M_n$ be the maximum of a size $n$ $\GEM(\tfrac{1}{2},\theta)$ exchangeable sample with  $\theta>-\tfrac12$. Then
$M_n/n$ converges in distribution as $n\to\infty$ to a random variable with the cumulative distribution function $\bigl(x/(x+2)\bigr)^{\theta+1/2}$.
\end{corollary}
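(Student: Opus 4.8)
The plan is to read the corollary off Theorem~\ref{thm:Mnalpha} by specializing to $\alpha=\tfrac12$, with the limiting law identified through the integral representation of Theorem~\ref{prop:Mnalphaasint}. First I would set $\alpha=\tfrac12$ in \eqref{eq:Mndistr}. Then $\alpha/(1-\alpha)=1$, so the normalization $n^{\alpha/(1-\alpha)}$ is simply $n$ and the statement is indeed about $M_n/n$; moreover $1/\alpha=2$ and $(1-\alpha)/\alpha=1$, so \eqref{eq:Mndistr} gives convergence in distribution of $M_n/n$ with limiting distribution function $x\mapsto\BE_{1/2,\theta}\exp(-\tfrac12 D_{1/2}^{2}/x)$ for $x>0$. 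The entire problem thus reduces to evaluating this Laplace transform of $\tfrac12 D_{1/2}^2$ in closed form.

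For this I would substitute $\alpha=\tfrac12$ into the integral formula \eqref{eq:Mndistrasint}. A short computation with the prefactor gives $2\alpha^{1-\theta-\alpha}=2^{\theta+1/2}$, $\Gamma(\theta/\alpha+1)=\Gamma(2\theta+1)$ and $x^{(1-\alpha)(\theta/\alpha+1)}=x^{\theta+1/2}$, while in the integrand the exponent simplifies to $(v^2/\alpha)^\alpha x^{1-\alpha}=v\sqrt{2x}$ and the power of $v$ becomes $v^{\theta+2\alpha-1}=v^\theta$. Hence the limiting distribution function equals $\tfrac{2^{\theta+1/2}\Gamma(\theta+1)}{\Gamma(2\theta+1)}x^{\theta+1/2}\int_0^\infty v^\theta\re^{-v\sqrt{2x}}J_\theta(2v)\,dv$, so everything now rests on the single integral, which I recognize as the Mellin transform of $\re^{-v\sqrt{2x}}J_\theta(2v)$ evaluated at $s=\theta+1$.

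The crux is evaluating that Mellin transform. I would quote its tabulated value \cite[I.10.7]{MR0352890}, which expresses it through a ${}_2F_1$ with upper parameters $\tfrac{\theta+s}{2},\tfrac{\theta+s+1}{2}$ and lower parameter $\theta+1$. The decisive point is that at $s=\theta+1$ the second upper parameter $\tfrac{\theta+s+1}{2}$ equals the lower parameter $\theta+1$, so the Gauss series collapses via ${}_2F_1(a,b;b;z)=(1-z)^{-a}$ (valid for $|z|<1$ and, by analyticity, for all $\Re z<1$) to the single power $(1+2/x)^{-(\theta+1/2)}$. I expect this collapse to be the only real obstacle; everything else is bookkeeping. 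It is worth noting that, were the closed-form integral representation of Theorem~\ref{prop:Mnalphaasint} not available, one would instead have to invert the moment sequence \eqref{eq:diver} of $D_{1/2}$ directly, which is the genuinely hard analytic content hidden in that theorem.

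Finally I would assemble the pieces. The integral evaluates to $\tfrac{\Gamma(2\theta+1)}{\Gamma(\theta+1)}(2x+4)^{-(\theta+1/2)}$; multiplying by the prefactor cancels both Gamma functions, and the surviving powers combine as $2^{\theta+1/2}x^{\theta+1/2}(2x+4)^{-(\theta+1/2)}=(x/(x+2))^{\theta+1/2}$. Since $\theta>-\tfrac12$ makes this increasing on $(0,\infty)$ and rising from $0$ to $1$, it is a genuine distribution function, which completes the argument.
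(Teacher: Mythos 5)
Your proposal is correct and follows essentially the same route as the paper: specialize Theorem~\ref{thm:Mnalpha} and the integral representation \eqref{eq:Mndistrasint} of Theorem~\ref{prop:Mnalphaasint} to $\alpha=\tfrac12$, recognize the integral as the Mellin transform of $\re^{-v\sqrt{2x}}J_\theta(2v)$ at $s=\theta+1$ via \cite[I.10.7]{MR0352890}, and collapse the resulting ${}_2F_1$ using ${}_2F_1(a,b;b;z)=(1-z)^{-a}$. All of your intermediate simplifications (the prefactor $2^{\theta+1/2}$, the integral value $\tfrac{\Gamma(2\theta+1)}{\Gamma(\theta+1)}(2x+4)^{-(\theta+1/2)}$, and the final cancellation) match the paper's computation exactly.
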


Some simplification is also possible for rational $\alpha\ne 1/2$ using the representation of the
$\alpha$-diversity in terms of product of random variables with beta and gamma distributions given in \cite[Sec.~8]{MR2676940}.

\section{Limit laws for the number of missing values and number of ties at the maximum}
\label{sec:tiesatmax}

This section offers some complements to the analysis of limit laws for $M_n$ in $\GEM(0,\theta)$ settings,
following the work of Gnedin et al.~\cite{MR2538083}.  

It was observed in other notation in \cite[(19)]{MR2538083} that in sampling from $\GEM(0,\theta)$,
for the number of missing values in the range $K_{0:n} := M_n - K_n $ there is the representation in distribution
\eqref{deltan} in terms of independent geometric$(p_i)$ random variables $G_{i:n}$ with $p_i:=i/(\theta+i)$. Writing now
$G(p_i)$ instead of $G_{i:n}$ to emphasize the lack of dependence on $n$ in this representation, apart from the trivial term $G_{n:n}$ which
obviously converges almost surely to $0$ as $n \to \infty$, we deduce easily that
\begin{equation}
\label{kzerolim}
K_{0:n} \convd K_{0:\infty}:=   \sum_{i = 1}^\infty ( G(p_i) - 1 ) _+
\end{equation}
This is just an explicit presentation of a random variable with the limit distribution of 
$K_{0:n}$ as $n \to \infty$ which was described in \cite[Proposition 5.1]{MR2538083} by the probability
generating function
\begin{equation}
\label{delgf}
g_\theta(z):=\BE z^{K_{0:\infty}} =  \frac{ \Gamma(1 + \theta) \Gamma(1 + (1-z)\theta  ) } {\Gamma( 1 + ( 2 - z) \theta)}   \qquad( |z| \le 1)
\end{equation}
which can be read from \eqref{kzerolim} as an infinite product of modified geometric generating functions.
This product simplifies to \eqref{delgf} due to the Weierstrass product formula for the gamma function \cite[Eq.\ (1.1.3), p.~1]{MR0058756}.
As observed in \cite[Proposition 5.1]{MR2538083}, this distribution of $K_{0:\infty}$ is a mixed Poisson distribution with
random parameter distributed as $\theta |\log \beta_{1,\theta}|$ for $\beta_{1,\theta}$ with the beta$(1,\theta)$ distribution of
$P_1$, the first $\GEM(1,\theta)$ frequency. 

That result may by understood as a refinement of \eqref{kzerolim} in which each term $(G(p_i) - 1)_+$ is replaced by the
distributionally equivalent random variable $N_i( \theta B_{1 - p_i} \eps_i/i )$ where the $\eps_i$ are independent standard exponential 
variables, the  $B_{1 - p_i}$ are independent Bernoulli variables with the indicated parameters, independent also of the $\eps_i$, and
the $N_i$ are independent rate one Poisson processes independent of both the $\eps_i$ and the $B_{1 - p_i}$. Then there is the identity in distribution
\begin{equation}
\label{betarep}
\sum_{i=1}^\infty B_{1 - p_i} \frac{ \eps_i }{i} \ed - \log \beta_{1,\theta}\,  \mbox{ where } 1 - p_i = \theta/(i + \theta)
\end{equation}
which can be checked by computing the Laplace transform of both sides at $\lambda > 0$.
This identity \eqref{betarep} is the instance $a= 1, b = \theta$ of the identity
\eqref{betaab} presented in the following proposition, which is the simpler variant for
$\log$ beta variables of a representation of $\log$ gamma variables due to Gordon 
\cite{MR1300491}. 

\begin{proposition}
For each $a , b > 0$, for $0 < \beta_{a,b} < 1$ with  density proportional to $u^{a-1}(1-u)^{b-1}$ at $0 < u < 1$ there is the identity in distribution
\begin{equation}
\label{betaab}
\sum_{j=0}^\infty B_{b/(a+b+j)}\frac { \eps_{j} }{ a + j } \ed - \log  \beta_{a,b} ,
\end{equation}
where the $\eps_j$ are i.i.d.\ standard exponential variables, independent of a sequence of independent Bernoulli variables $B_{p_j}$ 
with parameters $p_j= b/( a + b + j )$.
\end{proposition}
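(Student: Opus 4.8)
The plan is to verify the identity by computing the Laplace transform of each side at every $\lambda>0$; since both $-\log\beta_{a,b}$ and the series $\sum_{j\ge 0}B_{p_j}\eps_j/(a+j)$ are nonnegative, their Laplace transforms on $(0,\infty)$ determine their laws. First I would check that the left-hand series converges almost surely: each summand is nonnegative with mean $p_j/(a+j)=b/\bigl((a+b+j)(a+j)\bigr)=O(j^{-2})$, so the sum of means is finite and monotone convergence gives an almost surely finite limit. Consequently the Laplace transform of the whole sum is the convergent infinite product of the one-term transforms, by dominated convergence applied to the partial sums (the integrands $\re^{-\lambda S_N}$ are bounded by $1$ and converge almost surely).

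Next I would compute the transform of a single term. Conditioning on the Bernoulli variable, for $c:=a+j$ and $p:=p_j=b/(a+b+j)$,
\[
\E\exp\Bigl(-\lambda B_{p}\,\tfrac{\eps_j}{c}\Bigr)=(1-p)+p\,\frac{c}{c+\lambda}=1-p\,\frac{\lambda}{c+\lambda}.
\]
Substituting $p=b/(c+b)$ and clearing denominators turns this into $\frac{c(c+b+\lambda)}{(c+b)(c+\lambda)}$, so the left-hand side has Laplace transform
\[
\prod_{j=0}^{\infty}\frac{(a+j)(a+b+\lambda+j)}{(a+b+j)(a+\lambda+j)}.
\]

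The key step is to recognize this infinite product as a ratio of Gamma functions. Writing the generic factor as $\frac{(j+\alpha)(j+\beta)}{(j+\gamma)(j+\delta)}$ with $\alpha=a$, $\beta=a+b+\lambda$, $\gamma=a+b$, $\delta=a+\lambda$, the crucial balance $\alpha+\beta=\gamma+\delta=2a+b+\lambda$ holds, which is exactly the condition making the partial products telescope through the Gamma function: using $\prod_{j=0}^{N}(j+x)=\Gamma(N+1+x)/\Gamma(x)$ together with the Stirling estimate $\Gamma(N+1+\alpha)\Gamma(N+1+\beta)/\bigl(\Gamma(N+1+\gamma)\Gamma(N+1+\delta)\bigr)\to 1$ under that balance, the product evaluates to
\[
\frac{\Gamma(\gamma)\Gamma(\delta)}{\Gamma(\alpha)\Gamma(\beta)}=\frac{\Gamma(a+b)\,\Gamma(a+\lambda)}{\Gamma(a)\,\Gamma(a+b+\lambda)}.
\]

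Finally, for the right-hand side, $\E\bigl[\beta_{a,b}^{\lambda}\bigr]=B(a+\lambda,b)/B(a,b)=\Gamma(a+\lambda)\Gamma(a+b)/\bigl(\Gamma(a)\Gamma(a+b+\lambda)\bigr)$, which is the same expression; since $\E\,\re^{-\lambda(-\log\beta_{a,b})}=\E\beta_{a,b}^{\lambda}$, the two Laplace transforms agree for all $\lambda>0$ and the identity $\eqref{betaab}$ follows. The only real obstacle is the Gamma-product evaluation; everything else is routine, and the balance condition $\alpha+\beta=\gamma+\delta$ is precisely what the Bernoulli parameters $p_j=b/(a+b+j)$ are engineered to produce.
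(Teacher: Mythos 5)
Your proof is correct, but it follows a different route from the paper's. You verify the identity analytically: compute the Laplace transform of each summand by conditioning on the Bernoulli variable, obtain the infinite product $\prod_{j\ge0}\frac{(a+j)(a+b+\lambda+j)}{(a+b+j)(a+\lambda+j)}$, and evaluate it as $\frac{\Gamma(a+b)\,\Gamma(a+\lambda)}{\Gamma(a)\,\Gamma(a+b+\lambda)}$ using the balance of parameters and Stirling, matching $\E\beta_{a,b}^{\lambda}$. All steps check out: the single-factor computation, the a.s.\ convergence of the series via finiteness of $\sum_j \E[B_{p_j}\eps_j/(a+j)]=O(\sum j^{-2})$, the passage to the limit in the transforms, and the telescoping Gamma evaluation. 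The paper instead argues structurally through infinite divisibility: it invokes the beta--gamma identity $\beta_{a,b}\gamma_{a+b}\ed\gamma_a$ and a cited formula for the L\'evy density of $-\log\beta_{a,b}$, namely $\frac{x^{-1}}{1-e^{-x}}\bigl(e^{-ax}-e^{-(a+b)x}\bigr)$, expands this geometrically as $\sum_{j\ge0}x^{-1}\bigl(e^{-(a+j)x}-e^{-(a+b+j)x}\bigr)$, recognizes the $j$th term as the L\'evy density of $B_{p_j}\eps_j/(a+j)$, and concludes by additivity of L\'evy measures for sums of independent variables. (Amusingly, your method is exactly the one the paper suggests for checking the special case $a=1$, $b=\theta$ in the displayed identity just before the proposition.) Your approach buys self-containedness: it needs nothing beyond the beta moment formula and Gamma asymptotics, whereas the paper leans on a literature reference for the L\'evy density. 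The paper's approach buys structural insight: it exhibits the term-by-term correspondence of L\'evy measures, explaining why the Bernoulli-thinned exponentials are precisely the independent pieces into which the infinitely divisible law of $-\log\beta_{a,b}$ decomposes, and it places the result alongside Gordon's analogous representation of $\log$ gamma variables.
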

\begin{proof} The well known identity in distribution $\beta_{a,b} \gamma_{a + b} \ed \gamma_a$ for independent beta and gamma
variables with the indicated parameters, and known representations of $\log $ gamma variables, show that the distribution of the non-negative random variable 
$- \log \beta_{a,b}$ is infinitely divisible with L\'evy density at $x >0$ which is 
given by the formula
\cite[p. 769]{MR1981512} 
\begin{equation}
\label{levybeta}
\frac{ x ^{-1} } { 1 - e^{-x } } ( e^{- a x} - e^{ - (a + b) x } ) = \sum_{j=0}^\infty x^{-1} ( e ^{- ( a + j) x } - e^{- (a + b + j ) x } ).
\end{equation}
But it is also well known and easily checked that the $j$th term on the right side of  \eqref{levybeta} is the L\'evy density of the infinitely divisible law of the $j$th term
in \eqref{betaab}.
So the conclusion follows easily from the additivity of L\'evy measures.
\end{proof}

For $z=0$ the generating function \eqref{delgf} gives the limiting probability of what
is called in \cite{MR2137565} 
the event of a {\em complete sample} with no gaps:
\begin{equation}
\label{kmeq}
\lim_{n \to \infty} \P(K_{0:n} = 0) =  \lim_{n \to \infty} \P(K_n = M_n) = g_\theta(0)=\frac{ \Gamma(1 + \theta) ^2 }{ \Gamma( 1 + 2 \theta  ) }.
\end{equation}
If $\theta = m$ say is a positive integer, these formulas simplify by the gamma recursion $\Gamma(1+x) = x \Gamma(x)$. 
The generating function \eqref{delgf} reduces to rational function of $z$, with $m$ linear factors in the denominator. This implies
that $K_{0:\infty}$ is distributed as the sum of just $m$ independent geometrics $G_i$, with the by now familiar parameters $i/(i+\theta)$ for $1 \le i \le m$.
This yields the remarkable chain of identities in law
\[
K_{0:\infty} \ed \sum_{i=1}^m G_i \ed M_m - 1  \ed X_{n:n} - X_{n-m:n} \mbox{ for all } n \ge m \mbox{ if } \theta = m  \in \BN .
\]
where the first $\ed$  holds only if $\theta = m  \in \BN$, but the next two $\ed$ hold for all $\theta >0$ by Theorem \ref{thm:Mn}.
Also for $\theta = m \in \BN$, the right side of \eqref{kmeq} is the inverse of the central binomial coefficient ${2 m \choose m}^{-1} \sim 2^{-2m} \sqrt{\pi m}$ as $m \to \infty$,
and the approach of this probability to $0$ is similarly rapid for $\theta \to \infty$ through real values, due to Stirling's approximation  to the gamma function.
In particular, for 
$\theta = 1$, the probability of a complete sample is simply $1/2$. This is also known \cite{MR2137565} 
to be
the common value of $\P(K_n = M_n)$ for every $n$ in the  case of i.i.d.\  sampling from geometric$(1/2)$.
Some extensions of these results to more general $\RAM$s with i.i.d.\ factors will be given in \cite{ramgaps}.

Also either from \eqref{kzerolim} or from the probability generating function $g_\theta$ given in \eqref{delgf} 
it is easy to find the generating function for the L\'evy measure of $K_{0:\infty}$ which has atoms $\lambda_k$ in $k=1,2,\ldots$:
\[
\sum_{k=1}^\infty \lambda_kz^k=-\log g_\theta(0)+\log g_\theta(z)=\log \frac{\Gamma(1+2\theta)}{\Gamma(1+\theta)}\,\frac{\Gamma(1+(1-z)\theta)}{\Gamma(1+(2-z)\theta)}\,.
\]
Its Taylor's expansion gives an expression for the atoms $\lambda_k$ in terms of the polygamma function.

\smallskip 

The right tail probability function of $L_\infty$, the limit in distribution of $L_n$  can be read from its definition \eqref{nties}:
For $k = 0, 1,2, \ldots $
\[
\BP (  L_\infty > k ) = \BP( G_i = 0, 1 \le i \le k ) = \frac{(1)_k}{(1+ \theta)_k} \sim \frac{ \Gamma(1 + \theta) }{k^\theta} \mbox{ as } k \to \infty .
\]
The tail probability generating function for $L_\infty$ is the Gaussian hypergeometric function
\[
\sum_{k=0}^\infty \BP (  L_\infty > k ) z^k = \sum_{k=0}^\infty \frac{(1)_k}{(1+ \theta)_k} z^k = 
\setlength\arraycolsep{1pt}
{}_2 F_1\left(\begin{matrix}1,\,1\\ 1+\theta\end{matrix}; z\right)
\]
from which the limiting mean is found to be
\[
\lim_{n \to \infty} \BE ( L_n) = \BE( L_\infty) = \sum_{k=0}^\infty \BP (  L_\infty > k ) = \frac{ \theta }{(\theta - 1 )_+}
\]
where the last expression should be read as $\theta/(\theta - 1) < \infty$ for $\theta >1$, and $\theta/0 = \infty$ for $\theta \le 1$.
Similarly, the limiting second moment is finite only if $\theta > 2$ with the simple limit formula for the second binomial moment
\[
\lim_{n \to \infty} \BE {L_n \choose 2 } = \sum_{k=0}^\infty k \, \BP (  L_\infty > k ) = \frac{ \theta }{(\theta - 1 )_+ (\theta - 2 )_+}\,.
\]
It appears that this pattern continues, with finite third binomial moment $2!\theta/(\theta - 3)_3$ for $\theta > 3$, and so on.

Since the number of ties at the maximum $L_n$ can be defined on a common probability space, one can also be interested
in some stronger types of convergence for these random variables than the convergence in distribution. 
The answer to this question for independent random variables is well known: Brands et~al.~\cite{MR1294106}
conjectured and Baryshnikov et al.~\cite{MR1340152} confirmed that the number $L_n$ of maxima in a sample of $n$ independent discrete
random variables can exhibit just three types of behavior as $n\to\infty$: either it converges to 1 or 
to $\infty$  in probability, or it does not have a limit. These three cases can be distinguished in terms of the 
discrete hazards \eqref{stickbreak} of the distribution of $X_1$, which are nonrandom in this case, say, $H_j=h_j$. If $h_j\to0$ as $j\to\infty$, then
the number of maxima converges in probability to 1, and this is the only possibility for convergence to a proper distribution. 
This result was extended to an almost sure convergence by Qi~\cite{MR1458007}, who showed that a.s.\ convergence holds if and only if
the series $\sum_j h_j^2$ converges. Later, a probabilistic proof of this result was given by Eisenberg \cite{MR2493010}
along with some extensions.  His results are also formulated in terms of the discrete hazards:

\begin{lemma}[\cite{MR2493010}]\label{lem:limsup} 
Let $X_1,X_2,\dots$ be a sequence of i.i.d.\ random variables with values in $\BN$ and infinitely supported distribution.
Then, for any $\ell\in\BN$, $\BP[\limsup\nolimits_n L_n=\ell]=1$ if and only if\/ $\sum_{j=1}^\infty h_j^\ell=\infty$ and\/ $\sum_{j=1}^\infty h_j^{\ell+1}<\infty$. If the above series diverge for all $\ell\in\BN$ then $\BP[\limsup\nolimits_n L_n=\infty]=1$.
\end{lemma}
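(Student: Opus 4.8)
The plan is to reduce the almost sure behaviour of $L_n$ to a transparent statement about runs, and then to tame the resulting dependence by choosing the right filtration. I would generate the i.i.d.\ sample from its hazards: write $X_i=\min\{j:\xi_{i,j}=1\}$ for an independent array $\xi_{i,j}\sim\bern(h_j)$, so that $\{X_i\ge b\}=\{\xi_{i,1}=\cdots=\xi_{i,b-1}=0\}$ and, on this event, $\{X_i=b\}=\{\xi_{i,b}=1\}$. For each box $b$ let $i_1(b)<i_2(b)<\cdots$ enumerate the balls landing in $[b,\infty)$ (a.s.\ infinitely many, since the distribution is infinitely supported and each $h_j<1$), and let $T_b$ be the length of the initial run of box-$b$ balls in this subsequence, i.e.\ the largest $\ell$ with $\xi_{i_1(b),b}=\cdots=\xi_{i_\ell(b),b}=1$. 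The point is that $T_b$ is exactly the largest number of ties $L_n$ attains while $b$ is the sample maximum: box $b$ becomes the running maximum precisely when the first ball in $[b,\infty)$ falls in box $b$, and it then accumulates one extra tie per further box-$b$ ball until the first ball in $(b,\infty)$ arrives. Since $M_n\to\infty$ a.s., the running maximum visits infinitely many boxes, each ever-maximal $b$ contributes the value $T_b\ge1$ to the trajectory of $L_n$ and no larger value (as $L_n\le T_{M_n}$ always), so
\[
\limsup_{n\to\infty}L_n=\limsup_{b\to\infty}T_b\qquad\text{a.s.}
\]

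Next I would record the one-box law and dispose of the dependence, which is the only real obstacle. Conditioning on the enumeration $i_1(b),i_2(b),\dots$, the labels $\xi_{i_k(b),b}$ are i.i.d.\ $\bern(h_b)$, so $\BP[T_b\ge\ell]=h_b^\ell$ for every $\ell\ge0$. The events $\{T_b\ge\ell\}$ are not independent across $b$, but they are adapted: with $\mathcal{F}_b:=\sigma(\xi_{i,j}:i\ge1,\,j\le b)$ one has $\{T_b\ge\ell\}\in\mathcal{F}_b$, because membership in $[b,\infty)$ and its induced order depend only on coins of level $<b$, while the box-$b$ labels are the level-$b$ coins. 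Crucially the enumeration $i_1(b),i_2(b),\dots$ is $\mathcal{F}_{b-1}$-measurable and the level-$b$ coins are independent of $\mathcal{F}_{b-1}$, so
\[
\BP[T_b\ge\ell\mid\mathcal{F}_{b-1}]=h_b^\ell\qquad\text{a.s.}
\]
This deterministic conditional probability is what makes the dependence harmless.

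Both directions then follow from L\'evy's conditional Borel--Cantelli lemma, which for an adapted sequence gives $\{A_b\ \text{i.o.}\}=\{\sum_b\BP[A_b\mid\mathcal{F}_{b-1}]=\infty\}$ a.s. If $\sum_j h_j^{\ell+1}<\infty$ then $\sum_b\BP[T_b\ge\ell+1\mid\mathcal{F}_{b-1}]=\sum_b h_b^{\ell+1}<\infty$, so $T_b\ge\ell+1$ occurs only finitely often and $\limsup_b T_b\le\ell$; if $\sum_j h_j^{\ell}=\infty$ then likewise $\sum_b h_b^\ell=\infty$ forces $T_b\ge\ell$ infinitely often, whence $\limsup_b T_b\ge\ell$. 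Under both hypotheses this yields $\limsup_n L_n=\limsup_b T_b=\ell$ a.s. The converse implication is immediate: $\limsup_n L_n$ is an exchangeable functional of the i.i.d.\ sequence, hence a.s.\ equal to a constant in $\BN\cup\{\infty\}$ by Hewitt--Savage, and as $\ell$ ranges over $\BN$ the stated hypotheses are mutually exclusive (the sums $\sum_j h_j^\ell$ are nonincreasing in $\ell$) and, since $\prod_j(1-h_j)=0$ forces $\sum_j h_j=\infty$, jointly exhaustive, so exactly one value carries probability one. Finally, if $\sum_j h_j^\ell=\infty$ for every $\ell$, the lower bound gives $\limsup_b T_b\ge\ell$ for all $\ell$ and hence $\limsup_n L_n=\infty$ a.s. The work is thus conceptual rather than computational: the two insights that carry everything are the identification of the run variables $T_b$ with the ties-at-the-maximum, and the filtration $\mathcal{F}_b$ rendering their conditional probabilities constant.
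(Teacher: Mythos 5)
This lemma is one the paper does not prove at all: it is imported verbatim from Eisenberg \cite{MR2493010} (building on Qi's earlier result), and the surrounding text only remarks that Eisenberg's proof is ``probabilistic'' and phrased in terms of discrete hazards. So there is no in-paper argument to compare against, and your proposal must stand on its own. It does: the reduction $\limsup_n L_n=\limsup_b T_b$ is justified correctly (each ever-maximal box $b$ is exactly one with $\xi_{i_1(b),b}=1$, its plateau of $L_n$-values peaks at $T_b$, $L_n\le T_{M_n}$ always, and $M_n\to\infty$ a.s.\ supplies infinitely many ever-maximal boxes); the freezing argument giving $\BP[T_b\ge\ell\mid\mathcal{F}_{b-1}]=h_b^\ell$ is sound because the enumeration of balls in $[b,\infty)$ uses only coins of level below $b$; L\'evy's conditional Borel--Cantelli then yields both directions of the dichotomy; and the converse follows from your trichotomy argument, using $\prod_j(1-h_j)=0$ together with $h_j<1$ to guarantee $\sum_j h_j=\infty$, so that exactly one of the alternatives holds. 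In spirit this reconstructs the kind of record-value/run-length argument Eisenberg gives, so it is a fully adequate replacement for the citation.

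One remark in your write-up is actually false, though harmlessly so: you assert that the events $\{T_b\ge\ell\}$ are not independent across $b$. They are. Since the conditional distribution of $T_b$ given $\mathcal{F}_{b-1}$ is deterministic (namely $\BP[T_b\ge\ell\mid\mathcal{F}_{b-1}]=h_b^\ell$ for every $\ell$), the variable $T_b$ is independent of $\mathcal{F}_{b-1}$, which contains $\sigma(T_1,\dots,T_{b-1})$; by induction the whole sequence $(T_b)_{b\ge1}$ is mutually independent, with $\BP[T_b\ge \ell]=h_b^\ell$. So after your filtration computation you could equally well have invoked the ordinary two-sided Borel--Cantelli lemma. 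This does not damage the proof --- the conditional Borel--Cantelli route you take is valid and is, in effect, the cleanest way of establishing that independence anyway --- but the parenthetical claim should be corrected or simply deleted.
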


This result can be immediately translated to the exchangeable $\GEM$ case, because in this case hazards are independent random variables.
Heuristically, the next Theorem means that for $\alpha\in(0,1)$, as $k$ becomes large, the $\GEM(\alpha,\theta)$ probabilities $P_k$  a.s.\ 
tend to zero regularly and the maximum is unlikely to be hit twice before a new maximal value is reached, while for $\alpha=0$ the situation is
opposite, there exist $k$ such that $P_k$ is arbitrary large compared to the tail $1-F_k$ and such values $k$ 
are repeated as maxima of the sample many times.

\begin{theorem}
Let $X_1,X_2,\dots$ have the $\GEM(\alpha,\theta)$ exchangeable distribution. 
Then 
\begin{alignat*}{2}&\BP\bigl[\limsup\nolimits_n L_n=1\bigr]=1,\qquad &&\alpha>0;\\
&\BP\bigl[\limsup\nolimits_n L_n=\infty\bigr]=1,\qquad&&\alpha=0. 
\end{alignat*}
\end{theorem}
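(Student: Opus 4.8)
The plan is to condition on the random discrete distribution $\Pbul$ and then invoke the deterministic result of Lemma~\ref{lem:limsup} pathwise. Given $\Pbul$, the sample $X_1,X_2,\dots$ is i.i.d.\ from $\Pbul$, and by \eqref{proper} this conditional law is almost surely an infinitely supported distribution on $\BN$ whose discrete hazards are exactly the realized residual fractions $H_j$. Hence Lemma~\ref{lem:limsup} applies to the conditional law: for each $\ell$ the conditional probability $\BP[\limsup_n L_n=\ell\giv\Pbul]$ equals $1$ on the event $\{\sum_j H_j^\ell=\infty,\ \sum_j H_j^{\ell+1}<\infty\}$ and $0$ otherwise, while $\limsup_n L_n=\infty$ conditionally a.s.\ on the event that $\sum_j H_j^\ell=\infty$ for every $\ell$. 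Integrating these conditional statements over $\Pbul$ reduces the theorem entirely to the almost sure convergence or divergence of the random series $\sum_j H_j^\ell$ for the $\GEM(\alpha,\theta)$ hazards \eqref{eq:Y}.

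For $0<\alpha<1$ I would take $\ell=1$. The condition $\sum_j H_j=\infty$ holds automatically for any infinitely supported proper law: properness gives $\prod_i(1-H_i)=0$ by \eqref{properhazards}, i.e.\ $\sum_j-\log(1-H_j)=\infty$, whereas $\sum_j H_j<\infty$ would force $H_j\to0$ and then, by the comparison $-\log(1-x)\le x+x^2$ for small $x$, also $\sum_j-\log(1-H_j)<\infty$, a contradiction. For the second series, the beta moment formula gives
\[
\E H_j^2=\frac{(1-\alpha)(2-\alpha)}{\bigl(1+\theta+(j-1)\alpha\bigr)\bigl(2+\theta+(j-1)\alpha\bigr)}\sim\frac{(1-\alpha)(2-\alpha)}{\alpha^2 j^2}\qquad(j\to\infty),
\]
so $\sum_j\E H_j^2<\infty$, and by Tonelli $\sum_j H_j^2<\infty$ almost surely. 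Thus both conditions of Lemma~\ref{lem:limsup} hold with $\ell=1$ for almost every realization of $\Pbul$, which yields $\BP[\limsup_n L_n=1]=1$.

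For $\alpha=0$ the hazards $H_j$ are i.i.d.\ beta$(1,\theta)$, so $\E H_1^\ell=(1)_\ell/(1+\theta)_\ell>0$ is a positive constant for every $\ell\in\BN$. The strong law of large numbers then gives $\sum_{j\le n}H_j^\ell\sim n\,\E H_1^\ell\to\infty$, i.e.\ $\sum_j H_j^\ell=\infty$ almost surely for each fixed $\ell$; intersecting over the countably many $\ell$, almost surely every such series diverges. By the final clause of Lemma~\ref{lem:limsup} this gives $\limsup_n L_n=\infty$ conditionally a.s., hence $\BP[\limsup_n L_n=\infty]=1$.

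The only genuinely delicate point is the conditioning step of the first paragraph: one must argue, via a regular conditional distribution for the sample given $\Pbul$, that Lemma~\ref{lem:limsup}—an unconditional almost sure statement for a fixed sequence of hazards—may be applied along $\BP_{\alpha,\theta}$-almost every path of the random hazards and then integrated. This is routine once it is observed that conditionally on $\Pbul$ the sequence $(X_i)$ is genuinely i.i.d.\ with the required infinite support; everything else is the elementary moment bookkeeping above.
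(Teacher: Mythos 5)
Your proof is correct and follows essentially the same route as the paper: condition on the realized hazards $H_j$, apply Lemma~\ref{lem:limsup} pathwise, and settle the convergence of $\sum_j H_j^\ell$ via the beta moments \eqref{eq:Y} (taking $\ell=1$ for $\alpha>0$, and divergence for all $\ell$ when $\alpha=0$). Your substitutions of Tonelli and the strong law for the paper's appeal to Kolmogorov's three series theorem, and your explicit verification that $\sum_j H_j=\infty$ follows from properness, are minor variations that if anything make the argument slightly more self-contained.
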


\begin{proof}
If the distribution of $H_i$ is defined by \eqref{eq:Y} then
\[
\BE[H_i^k]= \frac{( 1 - \alpha )_k} {( 1 + (i-1) \alpha + \theta )_k }\,.
\]
Hence for $\alpha>0$ 
\[
\BE[H_i^k]\sim\frac{ (1 -\alpha)_k}{ ( i \alpha)^{k} },\qquad i\to\infty,
\]
and since $H_i\in(0,1)$ by Kolmogorov's three series theorem the series $\sum H_i^2$ converges a.s. So $\BP[\limsup\nolimits_n L_n=1|(H_i)]=1$ 
by Lemma~\ref{lem:limsup}, and 
also unconditionally.  On the other hand $\BE[H_i^k]$ does not depend on $i$ for $\alpha=0$, so the series $H_i^k$ diverges by the
same theorem and again Lemma~\ref{lem:limsup} implies $\BP[\limsup\nolimits_n L_n=\infty|(H_i)]=1$ and hence unconditionally.
\end{proof}

\paragraph{Acknowledgments.}
Thanks to Daniel Dufresne for references to the infinite divisibility of $\log$ beta  distributions.


\bibliographystyle{amsplain}
\bibliography{gemmax}


\providecommand{\bysame}{\leavevmode\hbox to3em{\hrulefill}\thinspace}
\providecommand{\MR}{\relax\ifhmode\unskip\space\fi MR }
\providecommand{\MRhref}[2]{%
  \href{http://www.ams.org/mathscinet-getitem?mr=#1}{#2}
}
\providecommand{\href}[2]{#2}





\end{document}